\DeclarePairedDelimiter\floor{\lfloor}{\rfloor}
\def\newaliasedtheorem#1[#2]#3{
  \newaliascnt{#1@alt}{#2}
  \newtheorem{#1}[#1@alt]{#3}
  \expandafter\newcommand\csname #1@altname\endcsname{#3}
}
\theoremstyle{plain}
\newtheorem{theorem}{Theorem}[section]
\theoremstyle{remark}
\theoremstyle{definition}
\theoremstyle{remark}
\numberwithin{equation}{section}
\def\eps{\varepsilon}
\def\R{\mathbb R}
\def\N{{\mathbb N}}
\def\Z{{\mathbb Z}}
\def\T{{\mathbb T}}
\def\Q{{\mathbb Q}}
\DeclareMathOperator{\diver}{div}
\DeclareMathOperator{\supp}{supp}
\newcommand*{\RR}{\ensuremath{\mathcal{R}}}
\newcommand{\nop}{12}
\DeclareMathOperator{\AC}{AC([0,1];\T^d)}
\newcommand{\no}{10}
\newcommand{\lam}{\lambda_q^{d(1+\alpha)+2}}
 \newcommand{\XX}{{\mbox{\boldmath$X$}}}
\DeclareMathOperator{\dist}{d}
\title{Almost everywhere non-uniqueness of integral curves for divergence-free Sobolev vector fields}
\author[ J. Pitcho, M. Sorella]{J. Pitcho, M. Sorella}
\address{Jules Pitcho
\hfill\break  Universit\"at Z\"urich, Instit\"ut f\"ur Mathematik, CH-8057 Z\"urich, Switzerland}
\email{jules.pitcho@uzh.ch}
\address{Massimo Sorella
\hfill\break  \'Ecole Polytechnique F\'ed\'erale de Lausanne, Institute of Mathematics, Station 8, CH-1015 Lausanne, Switzerland.}
\email{massimo.sorella@epfl.ch}
\begin{document}

\maketitle

\begin{abstract}
We construct divergence-free Sobolev vector fields in $ C([0,1]; W^{1,r}(\T^d; \R^d))$ with $r<d$ and $d \geq 2$ which simultaneously admit any finite number of distinct positive solutions to the continuity equation. We then show that the vector fields we produce have at least as many integral curves starting from  $\mathscr{L}^d$-a.e. point of $\T^d$ as the number of distinct positive solutions to the continuity equation these vector fields admit. 
Our work uses convex integration techniques introduced in  \cite{ML18,BDLC20}  to study non-uniqueness for positive solutions of the continuity equation. We then infer non-uniqueness for integral curves from Ambrosio's superposition principle.
\end{abstract}
\par
\medskip\noindent
\textbf{Keywords:}  Sobolev vector fields, generalized flows,  continuity equation, ODE, integral curves. 
\par
\medskip\noindent
{\sc MSC (2020):  35A02 - 35D30 - 35Q49 -34A12.
\par
}

\section{Introduction}\label{sec:intro}

In this paper we
study  positive solutions of the continuity equation
\begin{equation}
\label{eq_continuity}
\begin{cases}
\partial_t \rho  + \diver(\rho u)  = 0,
\\
\rho(\cdot, t)= \rho_0(\cdot)
\end{cases}
\end{equation} 
where $u : [0,1] \times \T^d \rightarrow \R^d$ is a prescribed vector field on the $d$-dimensional torus and $\rho_0 : \T^d \to \R$ is the initial datum. Throughout this work, \eqref{eq_continuity} will be understood in the sense of distributions which only requires that $\rho$ and $\rho u$ be integrable. We then study integral curves of the vector field $u$. 

In the smooth setting, the Cauchy-Lipschitz theory guarantees the existence of a unique flow $\XX:[0,1]\times \T^d\to \T^d$ of the vector field $u$ satisfying
\begin{equation}
\label{eq_flow}
\left\{
\begin{split}
\partial_t \XX(t,x)  & = u(t,\XX(t,x)), \\
\XX(0,x)& = x.
\end{split}
\right.
\end{equation} 
The classical Liouville theorem then gives a representation of solutions of \eqref{eq_continuity} in terms of the flow  $\XX$ of the vector field $u$ through the formula
\begin{equation}\label{eq_rep_formula}
\rho(t, \cdot ) \mathscr{L}^d=\XX(t,.)_\# ( \rho_0\mathscr{L}^d). 
\end{equation}

For rough vector fields, the relationship between the continuity equation and the corresponding flow is an active field of research since the foundational work of DiPerna and Lions in \cite{DPL89}. 
By means of a regularization scheme, they showed that if $u \in L^1((0,1);W^{1,r}(\T^d))$ and $\diver u \in L^1((0,1)\times \T^d)$, then \eqref{eq_continuity} is well-posed in the class 
 $L^\infty ((0,1); L^p(\T^d))$, where $ p,r\geq 1 $ satisfy the  relation
$$ \frac{1}{p} + \frac{1}{r} \leq 1.$$
 In \cite{AmbBV}, Ambrosio extended the work of DiPerna and Lions to the setting of BV vector fields.
 
 We now gather some useful definitions to investigate  the relation of the ODE \eqref{eq_flow} and the PDE \eqref{eq_continuity} in the non-smooth setting. 
\begin{definition}\label{defn:int-curve}
	Let  $u: (0,1) \times \T^d \to \R^d$ be a Borel map. We say that $\gamma \in AC([0,1]; \T^d)$ is an {\em integral curve}  of $u$ starting at $x$ if $\gamma(0)=x$ and $\gamma'(t) = u(t, \gamma(t))$ for a.e. $t\in (0,1)$.
\end{definition}

The {\em regular Lagrangian flow} is then a suitable selection of integral curves of $u$ by a compressibility condition (introduced in \cite{DPL89, AmbBV}). 
 
\begin{definition}[Regular Lagrangian flow]\label{def:reg:flow}
Let $u:(0,1)\times\T^d\to \R^d$ be Borel. We say that a Borel map $\XX:[0,1]\times \R^d\to \R^d$ is a {\em regular Lagrangian flow} of $u$ if 
	\begin{enumerate}[(i)]
		\item for $\mathscr{L}^d$-a.e. $x\in \T^d$, $t\mapsto \XX(t,x)$ is integral curve of $u$ with $\XX(0,x)=x$,
		\item there is a constant $C>0$ such that for every $t\in[0,1]$, $\XX(t,.)_\#\mathscr{L}^d\leq C\mathscr{L}^d$.
	\end{enumerate}
\end{definition}
The well-posedness of the regular Lagrangian flow for vector fields $u$ in $L^1((0,1);W^{1,1}(\T^d))$ with the negative part of the divergence satisfying $[\diver u ]^-\in L^1((0,1);L^\infty(\T^d))$ was first derived from the well-posedness of \eqref{eq_continuity} for bounded densities, and for such densities the formula \eqref{eq_flow} holds using as $\XX$ the  regular Lagrangian flow (see \cite{DPL89} and see \cite{AmbBV} for the BV vector fields case). Later in \cite{CripDeL06:estimates}, Crippa and De Lellis proved well-posedness of the regular Lagrangian flow without resorting to the PDE \eqref{eq_continuity}, but their approach only works for vector fields in 
$L^1((0,1);W^{1,r}(\T^d))$ with $r>1$. 
%
At any rate, the uniqueness of the  regular Lagrangian flow does not imply
$\mathscr{L}^d$- a.e. uniqueness of {\em integral curves}. Indeed, Bru\`e, Colombo and De Lellis recently produced divergence-free Sobolev vector fields -- uniqueness of the  regular Lagrangian flow associated to these vector fields therefore holds -- for which almost everywhere uniqueness of integral curves fails (see \cite[Theorem 1.3]{BDLC20}).
 We note that the case of continuous vector field is still open (question posed in  \cite[Section 2.3]{Alberti12}) although in \cite{CC16}, Crippa and Caravenna  proved almost everywhere uniqueness of the trajectories when $ u \in C((0,1); W^{1,r})$ when $r>d$.

In this work  we show that, for divergence-free Sobolev vector fields, uniqueness of integral curves of the ODE \eqref{eq_flow} 
can fail for a set of initial data with full measure. In fact, we show that the non-uniqueness for integral curves is even worse: for any natural number $N$ we produce divergence-free Sobolev vector fields with at least $N$ integral curves starting almost everywhere. We highlight that our result demonstrates the power of the selection principle of the  regular Lagrangian flow ((ii) in Definition \ref{def:reg:flow}) for integral curves of Sobolev vector fields. Indeed, amongst at least $N$ integral curves starting from $\mathscr{L}^d$-a.e. point of $\T^d$, the  regular Lagrangian flow selects a \emph{single} integral curve for $\mathscr{L}^d$-a.e. starting point. 

\begin{theorem} \label{t_main}
For every $d,N \in \N$, $ d \geq 2$, $r \in [1,d)$ and $s< \infty$ there is a divergence-free vector field $u \in C((0,1); W^{1,r}(\T^d; \R^d) \cap L^s)$ such that the following holds for every Borel map $v$ with $u = v $ $\mathscr{L}^{d+1}$-a.e.:

(NU) 
For $\mathscr{L}^d$-a.e. $x \in \T^d$ there
 are at least $N$ integral curves of $v$ starting at $x$.
\end{theorem}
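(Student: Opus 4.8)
The plan is to build the vector field $u$ by convex integration so that the continuity equation \eqref{eq_continuity} admits $N$ distinct positive solutions $\rho_1,\dots,\rho_N$ with the same initial datum, and then to extract $N$ distinct families of integral curves via Ambrosio's superposition principle. First I would fix, once and for all, $N$ smooth positive initial densities $\bar\rho_1,\dots,\bar\rho_N$ on $\T^d$ — for instance $\bar\rho_i \equiv 1$ for all $i$, but evolved to $N$ genuinely different profiles at time, say, $t=1/2$ by prescribing $N$ distinct smooth target solutions on disjoint time intervals — and then run the convex integration scheme of \cite{ML18,BDLC20} simultaneously for all $N$ density-flux pairs. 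The scheme produces, as a limit of a convergent sequence of smooth approximations $(\rho_{i,q},u_q)$, a single divergence-free vector field $u$ and densities $\rho_i$ solving $\partial_t\rho_i + \diver(\rho_i u) = 0$ in the sense of distributions, with $u \in C((0,1);W^{1,r}\cap L^s)$ for any $r<d$, $s<\infty$, and with $\rho_i$ bounded, positive, and pairwise distinct. The subtlety compared to the single-density case is that one must control the \emph{same} perturbation $u_{q+1}-u_q$ against all $N$ density errors at once; this is handled by summing the $N$ error terms in the inductive estimates and choosing the oscillation parameter $\lambda_{q+1}$ large enough to absorb the resulting (finite, $N$-dependent) constant. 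I expect this bookkeeping — keeping the $N$ solutions pairwise distinct while all errors go to zero — to be the main obstacle; distinctness is enforced by arranging that $\|\rho_i(1/2,\cdot) - \rho_j(1/2,\cdot)\|_{L^1}$ stays bounded below along the iteration for $i\neq j$, which is possible because the perturbations are small in $C^0_t L^1_x$ and the initial gap between the $N$ smooth targets is fixed.

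Once $u$ and the $\rho_i$ are in hand, the second half is soft. For each $i$, the pair $(\rho_i,u)$ with $\rho_i \geq c_i > 0$ and $\rho_i u \in L^1$ satisfies the hypotheses of Ambrosio's superposition principle, so there is a probability measure $\eta_i$ on $C([0,1];\T^d)$ concentrated on integral curves of $u$ such that $(e_t)_\#\eta_i = \rho_i(t,\cdot)\mathscr{L}^d / \|\rho_i(t,\cdot)\|_{L^1}$ for every $t$ (after normalizing; note $\|\rho_i(t,\cdot)\|_{L^1}$ is constant in $t$ since $u$ is divergence-free). Writing $\eta_i = \int_{\T^d} \eta_{i,x}\, d\mu_i(x)$ for its disintegration over the initial point $e_0$, we get for $\mu_i$-a.e. $x$ — equivalently for $\mathscr{L}^d$-a.e. $x$, since $\mu_i$ is comparable to $\mathscr{L}^d$ — a probability measure $\eta_{i,x}$ concentrated on integral curves of $u$ starting at $x$. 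The remaining point is to deduce that the $N$ families genuinely produce $N$ distinct curves from a common starting point; this follows by noting that if two families $\eta_i,\eta_j$ agreed on $\mathscr{L}^d$-a.e. fiber, then pushing forward by $e_{1/2}$ would force $\rho_i(1/2,\cdot) = \rho_j(1/2,\cdot)$ up to the normalization, contradicting the distinctness built in during the convex integration. A short measure-theoretic argument — selecting from each $\eta_{i,x}$ a single curve $\gamma_{i,x}$ in a Borel-measurable way (e.g. via a measurable selection theorem applied to the barycenter or to a lexicographic minimum) — then yields, for $\mathscr{L}^d$-a.e. $x$, at least $N$ pairwise distinct integral curves $\gamma_{1,x},\dots,\gamma_{N,x}$ of $u$ starting at $x$.

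Finally, the statement (NU) is about \emph{every} Borel representative $v$ of $u$, not just $u$ itself. Here I would use that integral curves are Lipschitz (since $u \in L^s$ with $s$ large, in fact one can arrange $u \in L^\infty_t L^s_x$ or work directly with the bounded approximations, giving $u$ bounded enough that integral curves are at least absolutely continuous; more to the point the curves constructed above satisfy $\gamma' = u(t,\gamma(t))$ for a.e.\ $t$), and that for any fixed absolutely continuous curve $\gamma$, the set $\{t : u(t,\gamma(t)) \neq v(t,\gamma(t))\}$ can be shown to have zero Lebesgue measure for $\eta_i$-a.e.\ $\gamma$: indeed $\int_{C([0,1];\T^d)} \int_0^1 \mathbbm{1}_{\{u \neq v\}}(t,\gamma(t))\, dt\, d\eta_i(\gamma) = \int_0^1 \frac{1}{\|\rho_i\|_{L^1}}\int_{\T^d} \mathbbm{1}_{\{u\neq v\}}(t,x)\,\rho_i(t,x)\,d\mathscr{L}^d(x)\, dt = 0$ because $\{u \neq v\}$ is $\mathscr{L}^{d+1}$-null and $\rho_i \in L^\infty$. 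Hence the same curves $\gamma_{i,x}$ are integral curves of $v$ for $\mathscr{L}^d$-a.e.\ $x$, and (NU) follows. The only place where genuine care is needed is ensuring the "$N$ distinct" property survives passing from $\eta_i$ to a single Borel selection and to the representative $v$, but since distinctness is detected by a fixed-time marginal that is unaffected by null modifications, this goes through.
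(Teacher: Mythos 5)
Your overall strategy (convex integration producing one vector field with $N$ positive solutions of the continuity equation, then Ambrosio's superposition principle, then the Fubini argument showing the conclusion is independent of the Borel representative $v$) matches the paper's, and the representative-independence step is essentially identical to \eqref{eq_not_depend_pointwise}. However, there is a genuine gap at the decisive step, namely how you pass from ``the $\rho_i$ are distinct'' to ``$N$ distinct integral curves from $\mathscr{L}^d$-a.e.\ starting point.''

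Your argument is: if $\eta_{i,x}=\eta_{j,x}$ for $\mathscr{L}^d$-a.e.\ $x$, then $(e_{1/2})_\#\eta_i=(e_{1/2})_\#\eta_j$, contradicting $\|\rho_i(1/2,\cdot)-\rho_j(1/2,\cdot)\|_{L^1}>0$. This contradiction only shows that the set $\{x:\eta_{i,x}=\eta_{j,x}\}$ is \emph{not of full measure}; it does not show that this set is null. So it yields non-uniqueness on a set of positive measure — which is essentially the earlier result of Bru\`e--Colombo--De Lellis that this paper is upgrading — but not the a.e.\ statement (NU). A lower bound on $\|\rho_i(1/2,\cdot)-\rho_j(1/2,\cdot)\|_{L^1}$ is simply too weak: it is compatible with the fibers agreeing off an arbitrarily small (but positive-measure) set. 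What the paper actually needs, and what its Theorem \ref{p_comp_supp}(iii) delivers, is that the supports of $\rho_i(t,\cdot)$ and $\rho_j(t,\cdot)$ are \emph{disjoint} (up to a null set) for $t\in[2/3,1]$. Then by \eqref{eq_superposition_principle}, for $\mathscr{L}^d$-a.e.\ $x$ and $\eta^i_x$-a.e.\ $\gamma$ one has $\gamma(1)\in A_i$ with the $A_i$ pairwise disjoint, so the measures $\eta^1_x,\dots,\eta^N_x$ have pairwise disjoint supports for a.e.\ $x$ — this is a pointwise-in-$x$ statement, which is exactly what a.e.\ non-uniqueness requires.

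A second, related defect: even granting $\eta_{i,x}\neq\eta_{j,x}$ for all $i\neq j$ and a.e.\ $x$, pairwise distinctness of $N$ probability measures does not give $N$ pairwise distinct curves in their supports (three distinct measures can all be supported on the same two curves). Your measurable-selection step does not repair this. Disjointness of the supports of the $\eta^i_x$ does: one curve chosen from each support ends in a different $A_i$, so the $N$ curves are automatically pairwise distinct. In short, the missing idea is the disjoint-supports property of the densities at the terminal time, which is the main new output of the convex integration scheme in this paper (localizing the negative correctors to the sets $A_i$, and using $N$ disjoint families of building blocks so that a single vector field serves all $N$ densities).
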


 Ambrosio's superposition principle \cite[Theorem 3.2]{A08} bridges the gap between positive solutions of the continuity equation \eqref{eq_continuity} for a vector field $u$ and the integral curves of $u$ (as in Definition \ref{defn:int-curve}): it gives a way of representing positive solutions of the continuity equation in terms of integral curves of the vector field without any differentiability assumption, i.e. under more general assumptions than  DiPerna-Lions theory. 
Using Ambrosio's superposition principle, we will derive Theorem \ref{t_main} from a non-uniqueness result for positive solutions of \eqref{eq_continuity}, which in turn will be proved using a convex integration iterative procedure. The term convex integration is generic to designate iterative techniques by which  {\em wild solutions} of PDEs are constructed. Such techniques  were introduced  in the study of the continuity equation in the groundbreaking work of Modena and 
Sz\'{e}kelyhidi  \cite{ML18,MoSzRenormalized} 
 (see also \cite{BDLLOnsager,BV18,DLEuler,DLL09,DLL13,Is18,CDRS21,CL21,CL20NS,CL20,SG21} for interesting results using the convex integration methods).


\begin{theorem}\label{p_comp_supp}
Let $d,N \in \N$ with $ d \geq 2$. Let $p \in (1, \infty)$, $r \in [1, \infty]$ be such that
$$\frac{1}{p} + \frac{1}{r} > 1 + \frac{1}{d},$$
and denote by $p'$ the dual exponent of $p$, i.e. $1/{p} + {1}/{p'}=1$.
Then there exists a divergence-free vector field $u\in C^0([0,1], W^{1,r}(\T^d)\cap L^{p'}(\T^d))$ and a family of nonnegative densities $\{\rho_i\}_{1\leq i \leq N} \subset C^0([0,1], L^p(\T^d))$ such that  the following holds

\begin{itemize}
	\item[(i)] the couple $(u,\rho_i)$ weakly solves \eqref{eq_continuity},
	\item[(ii)] for each time $t\in [0,1/3] $, $ \rho_i(t, \cdot )\equiv 1$ for any $i$,
	\item[(iii)] for each time $t\in [2/3,1]$, $\supp\left (\rho_i(t, \cdot ) \right) \cap$ $\supp\left  (\rho_j(t, \cdot ) \right) $ is negligible for any $i \neq j$. Furthermore ${\supp}(\rho_i (t, \cdot ))$ has non-empty interior for any $t \in [0,1]$ and any $i.$
\end{itemize}
\end{theorem}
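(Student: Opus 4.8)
The plan is to prove Theorem~\ref{p_comp_supp} by a convex integration scheme for the continuity equation, adapting the constructions of \cite{ML18,MoSzRenormalized,BDLC20} so as to drive $N$ densities with a \emph{single} divergence-free velocity. First I would fix $N$ open sets $\Omega_1,\dots,\Omega_N\subset\T^d$ with pairwise disjoint closures, and inside each one a smaller ball $B_i$ with $\overline{B_i}\subset\Omega_i$. I then choose smooth nonnegative reference densities $\rho_{i,0}:[0,1]\times\T^d\to[0,\infty)$ with $\int_{\T^d}\rho_{i,0}(t,\cdot)\,dx=1$ for all $t$, equal to $1$ on the whole torus for $t\in[0,1/3]$, supported in $\overline{\Omega_i}$, constant in $t$ and equal to $1$ on $B_i$ for $t\in[2/3,1]$, equal to $1$ on $B_i$ for \emph{every} $t$, and interpolated smoothly and nonnegatively in between. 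Taking $u_0\equiv 0$, the pair $(u_0,\rho_{i,0})$ satisfies $\partial_t\rho_{i,0}+\diver(\rho_{i,0}u_0)=\diver R_{i,0}$ with $R_{i,0}:=\mathcal{R}\,\partial_t\rho_{i,0}$ for an inverse divergence operator $\mathcal{R}$; since $\partial_t\rho_{i,0}(t,\cdot)$ has zero mean and vanishes on $B_i$, one may arrange moreover that $\supp R_{i,0}\subset[1/3,2/3]\times(\T^d\setminus\overline{B_i})$.

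The heart of the argument is an inductive proposition producing, at frequencies $\lambda_q\to\infty$, smooth divergence-free $u_q$ and smooth $\rho_{i,q}\geq 0$ with $\partial_t\rho_{i,q}+\diver(\rho_{i,q}u_q)=\diver R_{i,q}$, together with quantitative bounds of the form $\|R_{i,q}\|_{L^1_{t,x}}\lesssim\delta_{q+1}$, $\|u_{q+1}-u_q\|_{C^0_tW^{1,r}_x}+\|u_{q+1}-u_q\|_{C^0_tL^{p'}_x}\lesssim\delta_q^{1/2}$ and $\|\rho_{i,q+1}-\rho_{i,q}\|_{C^0_tL^p_x}\lesssim\delta_q^{1/2}$ (all $\lambda$-powers being absorbed by a large frequency gap), and with the persistence of the reference structure: all corrections, and each $R_{i,q}$, vanish on $([0,1/3]\cup[2/3,1])\times\T^d$ and on $[0,1]\times\overline{B_i}$. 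The velocity and density corrections are built from spatially concentrated, fast oscillating building blocks (Mikado-type divergence-free flows supported on thin tubes around periodic lines of $\T^d$, with parameters $\lambda_{q+1}\gg\mu_{q+1}\gg\lambda_q$ tuning oscillation and concentration, the density concentrating more than the velocity), and nonnegativity of $\rho_{i,q}$ is retained through the structure of the density building blocks as in \cite{MoSzRenormalized,BDLC20}. The requirement that the telescoping series converge \emph{simultaneously} in $C^0_tW^{1,r}_x$ (and $L^{p'}_x$) for $u$ and in $C^0_tL^p_x$ for the $\rho_i$ is exactly what pins down the admissible parameters and forces the scaling relation $\tfrac1p+\tfrac1r>1+\tfrac1d$.

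The one genuinely new point is that a single divergence-free velocity correction $w_q=\sum_{i=1}^N w_{i,q}$ has to reduce all $N$ errors at once. I would construct $w_{i,q}$ (paired with a density correction $\vartheta_{i,q}$) so that, exactly as in the single-density scheme, its quadratic self-interaction cancels $\diver R_{i,q}$ to leading order; additionally I would arrange that for $i\neq j$ the supports of $w_{i,q}(t,\cdot)$ and $w_{j,q}(t,\cdot)$ (resp.\ of $\vartheta_{i,q}$ and $\vartheta_{j,q}$) are disjoint, which is possible with room to spare since at scale $\lambda_{q+1}$ there are $\sim\lambda_{q+1}^d$ available cells and only the fixed number $N$ of tube-families to place. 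Then in the $j$-th equation the cross terms $\diver(\vartheta_{j,q}w_{i,q})$ vanish identically, while the surviving interaction terms $\diver(\rho_{j,q}w_{i,q})$ and $\diver(\vartheta_{j,q}u_q)$ are treated by the usual inverse-divergence gain (each $w_{i,q}$ being mean-free over its period) and folded into $R_{j,q+1}$ with the same estimates as in the one-density case; since each $w_{i,q}$ is divergence-free, so is $w_q$, hence so is $u_{q+1}$.

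Passing to the limit, $u_q\to u\in C^0([0,1];W^{1,r}(\T^d)\cap L^{p'}(\T^d))$, divergence-free, and $\rho_{i,q}\to\rho_i\in C^0([0,1];L^p(\T^d))$ with $\rho_i\geq 0$ and $R_{i,q}\to 0$ in $L^1$, so $(u,\rho_i)$ solves \eqref{eq_continuity} distributionally, which is~(i). Since all corrections vanish on $([0,1/3]\cup[2/3,1])\times\T^d$ we get $\rho_i\equiv 1$ on $[0,1/3]$, which is~(ii), and $\rho_i\equiv\rho_{i,0}(2/3,\cdot)$ on $[2/3,1]$, so $\supp\rho_i(t,\cdot)\subset\overline{\Omega_i}$ has negligible pairwise intersections; since the corrections also vanish on $[0,1]\times\overline{B_i}$, we have $\rho_i(t,\cdot)\equiv 1$ on $B_i$ for every $t$, so $\supp\rho_i(t,\cdot)$ always has non-empty interior, which completes~(iii). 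I expect the main obstacle to be the combined bookkeeping of the coupled step: designing the $N$ families of building blocks so that they tile disjointly in space while each still carries the amplitude and oscillation needed to absorb its own error, and checking that all $O(N^2)$ interaction errors — together with nonnegativity of the $\rho_{i,q}$ and the $C^0$-in-time (not merely $L^\infty$-in-time) estimates — close the induction under the single scaling relation $\tfrac1p+\tfrac1r>1+\tfrac1d$.
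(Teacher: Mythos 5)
Your overall strategy coincides with the paper's: a convex integration scheme carrying $N$ densities and a single divergence-free field, with building blocks for different indices $i$ having disjoint supports so that each density perturbation interacts only with ``its own'' velocity perturbation, and with time/space structure of the initial data preserved along the iteration. However, there are two genuine gaps in the plan.

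First, you never account for the zero-mean corrector of the density perturbation. The principal density perturbation built from concentrated nonnegative blocks has a nonzero, time-dependent spatial average, and without compensating it the equation $\partial_t\rho_{q+1,i}+\diver(\rho_{q+1,i}u_{q+1})=-\diver R_{q+1,i}$ is not solvable for any $R_{q+1,i}$ (the total mass of $\rho_{q+1,i}$ would fail to be conserved). The compensating corrector is necessarily negative somewhere, and this is where your structural requirements bite: a constant-in-space corrector as in \cite{ML18,BDLC20} would make $\rho_{q+1,i}$ negative outside $\overline{\Omega_i}$ (where the density vanishes for $t$ near $2/3$), while your insistence that all corrections vanish on $[0,1]\times\overline{B_i}$ rules out the one region where the density is uniformly bounded below. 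The paper resolves exactly this by localizing the corrector via a fixed profile $g_i$ supported in a ball $A_i$ on which $\inf\rho_{q,i}\geq c>0$, accepting that the density there drops by at most $\sum_q\delta_{q+1}\leq 1/2$ rather than being exactly preserved (see \eqref{eqn:localize_space} and property (b) of Proposition \ref{p_inductive}). Your scheme needs this (or an equivalent device), and as stated your inductive hypotheses are incompatible with it.

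Second, your justification for the pairwise disjointness of the $N$ families of velocity blocks — counting $\sim\lambda_{q+1}^d$ cells against $N$ families — does not engage with the actual geometry. Mikado-type blocks are travelling waves supported in thin periodic tubes around lines with distinct rational directions; disjointness must hold \emph{for all times}, and for $d\geq 3$ it is obtained by translating non-parallel tubes off each other (Lemma \ref{l_buildingblocks}), not by cell counting. For $d=2$ any two non-parallel periodic lines in $\T^2$ intersect, so no static translation works; the paper devotes Section \ref{section_dimension2} to choosing rational travelling speeds $v_n$ and phases $a_{\xi,n}$ so that the moving bumps never collide, and this is strictly harder here than in \cite{BDLC20} because with $N$ independent errors one needs disjointness for \emph{all} pairs of speeds $(v_n,v_m)$, not just comparable ones. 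Since the theorem is asserted for all $d\geq 2$, this is a substantive missing ingredient rather than bookkeeping.
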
 

In order to prove Theorem \ref{p_comp_supp}, we adapt the convex integration scheme for positive solutions of the continuity equation introduced in \cite{BDLC20}: our proof makes use of two new ideas. 
Firstly, we keep track of a fixed number of densities and one {\em single} vector field throughout the iteration scheme. Each density is perturbed using a distinct family of building blocks\footnote{The term building block refers to  smooth functions or vector fields which are fixed before iteration, 
	and which are used to construct perturbations in a convex integration iterative scheme. 
}. Each of these building blocks then interacts {\em only} with one  term of the perturbation to the vector field (see the key identity \eqref{eqn:formulone_prodotto}). 
  Secondly, we localize in space the corrector parts of the perturbation to the densities (see \eqref{eqn:localize_space}) which  will be negative, in order to preserve the positivity of the solutions. 
We also note that to prove Theorem \ref{p_comp_supp} in dimension $d=2$ the ideas  of \cite[Section 7]{BDLC20} need to be adapted for technical reasons.  This will be explained in Section \ref{section_dimension2}. 

\section{Preliminary lemmas}
In this section, we gather some useful lemmas from \cite{BDLC20,ML18}.  We will write $\T^d$ for $\R^d/ \Z^d$. 


\begin{lemma}\label{l_geom}
	Let $d,N \in \N^+$. Then, there exist disjoint families, $\Lambda^i$, of finite sets $\{ \xi\}_{\xi \in \Lambda^i} \subseteq \partial B_1\cap \mathbb{Q}^d$ for $i=1,..,N$ and smooth nonnegative coefficients $a_\xi(R)$ such that for every $R \in \partial B_1$ 
	\[
	R= \sum_{\xi \in \Lambda^i} a_\xi(R) \xi\, 
	\]
for any $i=1,..,N$.	
\end{lemma}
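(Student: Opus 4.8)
The plan is to obtain Lemma~\ref{l_geom} from the well-known single-family geometric lemma used in convex integration for the continuity equation (see \cite{BDLC20,ML18}), which asserts that for any finite open cover of $\partial B_1$ one can produce a single finite family $\Lambda \subseteq \partial B_1 \cap \mathbb{Q}^d$ together with smooth nonnegative coefficients $a_\xi(R)$ such that $R = \sum_{\xi \in \Lambda} a_\xi(R)\,\xi$ for all $R$ in a neighbourhood of $\partial B_1$. The only extra feature we need here is that we can arrange $N$ such families $\Lambda^1,\dots,\Lambda^N$ that are pairwise disjoint as subsets of $\partial B_1 \cap \mathbb{Q}^d$. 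Since rationality of the directions and the number of directions in each family are not rigid constraints, the disjointness should come essentially for free by a rotation/perturbation argument.

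The key steps, in order, would be: (1) invoke the classical geometric lemma to produce one admissible family $\Lambda = \{\xi_1,\dots,\xi_m\} \subseteq \partial B_1 \cap \mathbb{Q}^d$ with its coefficients $a_{\xi}(R)$, valid on all of $\partial B_1$ (one may need a compactness argument over the sphere to pass from a local statement to all $R \in \partial B_1$, splitting $\partial B_1$ into finitely many small caps and patching the local families with a smooth partition of unity — this is standard). (2) Choose $N$ rotations $O_1 = \Id, O_2, \dots, O_N \in SO(d)$ such that the sets $O_k \Lambda$ are pairwise disjoint; this is possible because each $O_k\Lambda$ is finite, so the ``bad'' set of rotations mapping some $\xi_a$ of one family onto some $O_\ell \xi_b$ of another is a finite union of lower-dimensional subvarieties of $SO(d)$, hence has empty interior. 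To keep rationality, pick the $O_k$ among rational rotations (which are dense in $SO(d)$ — e.g. generated by rational Givens-type rotations), so that $O_k\Lambda \subseteq \partial B_1 \cap \mathbb{Q}^d$ still holds; a dense choice avoiding the finite bad set is clearly available. (3) Define $\Lambda^k := O_k \Lambda$ and $a^k_\eta(R) := a_{O_k^{-1}\eta}(O_k^{-1}R)$ for $\eta \in \Lambda^k$. Then for every $R \in \partial B_1$,
\[
\sum_{\eta \in \Lambda^k} a^k_\eta(R)\,\eta = \sum_{\xi \in \Lambda} a_\xi(O_k^{-1}R)\, O_k \xi = O_k\Big(\sum_{\xi\in\Lambda} a_\xi(O_k^{-1}R)\,\xi\Big) = O_k\big(O_k^{-1}R\big) = R,
\]
using that $O_k^{-1}R \in \partial B_1$ and the defining identity for $\Lambda$. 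Smoothness and nonnegativity of $a^k_\eta$ are inherited from $a_\xi$ since $O_k$ is a fixed smooth (linear) map.

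The main obstacle, such as it is, is step~(1): making sure one actually has a version of the single-family geometric lemma valid on the full sphere $\partial B_1$ with globally defined smooth coefficients, rather than just locally near a fixed matrix/direction, since the cited references sometimes state it in localized form. This is handled by the standard compactness-plus-partition-of-unity patching: cover $\partial B_1$ by finitely many caps on each of which a local family works, take a subordinate smooth partition of unity $\{\chi_j\}$, let $\Lambda$ be the (disjoint, after a preliminary rotation of each piece if needed) union of the local families, and set $a_\xi(R) = \chi_j(R)\, a^{(j)}_\xi(R)$ for $\xi$ in the $j$-th local family. Everything else — the disjointness of the $N$ copies and the verification of the decomposition identity — is then routine linear algebra as displayed above.
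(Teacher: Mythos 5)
Your proof is correct. A preliminary remark: the paper does not actually prove Lemma~\ref{l_geom} --- it is listed among the preliminaries as a fact imported from \cite{BDLC20,ML18} (where a version with two disjoint families appears) --- so there is no in-text argument to compare yours against, and I assess your argument on its own. The rotation scheme in steps (2)--(3) is sound: rational orthogonal matrices are dense in $SO(d)$ (e.g.\ via the Cayley transform of rational skew-symmetric matrices), they preserve $\partial B_1\cap\Q^d$, and for $d\ge 2$ each condition $O\xi_a=\eta$ confines $O$ to a coset of the stabilizer of $\xi_a$, a closed set with empty interior in $SO(d)$, so the finitely many bad conditions can be avoided while staying rational; the conjugation formula for the coefficients and the verification of the decomposition identity are correct. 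Two small points. First, the worry in your step (1) about only having a local statement is unnecessary for the transport-equation version of the geometric lemma: the standard construction takes $d+1$ rational unit vectors $\xi_0,\dots,\xi_d$ whose convex hull contains $0$ in its interior, writes $R=\sum_{j\ge 1} b_j(R)(\xi_j-\xi_0)$ with $b_j$ linear in $R$, and adds a large multiple of the vanishing combination $\sum_j \lambda_j \xi_j=0$ (with $\lambda_j>0$) so that all coefficients become positive; this gives globally defined affine, hence smooth, nonnegative coefficients on all of $\partial B_1$ with no partition of unity. The same construction also produces the $N$ disjoint families directly, by choosing $N$ distinct rational approximations of the simplex vertices (density of $\partial B_1\cap\Q^d$), which is presumably the intended proof; your rotation step is a correct alternative rather than a necessity. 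Second, the statement as written allows $d=1$, where the conclusion fails for $N\ge 2$ (every admissible family must contain both $+1$ and $-1$); your argument, like the paper's use of the lemma, implicitly assumes $d\ge 2$, and it would be worth saying so explicitly.
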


\subsection{Antidivergences}
We recall that the operator $\nabla \Delta^{-1}$ is an antidivergence when applied to smooth vector fields of zero mean. The following lemma proven in \cite[Lemma 2.3]{ML18} and \cite[Lemma 3.5]{MS20}  gives an improved antidivergence operator for functions with a particular structure.

\begin{lemma}\label{lemma23}(Cp. with \cite[Lemma 3.5]{MS20})
	Let $\lambda \in \N$ and $f, g : \T^d \to \R$ be smooth functions, and $g_\lambda= g(\lambda x)$. Assume that $\int g = 0$. Then if we set $\RR (f g_\lambda) = f \nabla \Delta^{-1} g_\lambda -\nabla \Delta^{-1} (\nabla f \cdot \nabla \Delta^{-1}g_\lambda+\int fg_\lambda)$, we have that $\diver  \RR (f g_\lambda) = f g_\lambda-\int fg_\lambda$ and for some $C:=C({k,p})$
	\begin{equation}
	\label{ts:antidiv}
	\|D^k \RR (f g_\lambda)\|_{L^p} \leq C \lambda^{k-1} \|f\|_{C^{k+1}} \| g\|_{W^{k,p}} \qquad \mbox{for every } k\in \N, p\in [1,\infty].
	\end{equation}
\end{lemma}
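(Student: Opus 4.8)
The plan is to verify directly that the proposed operator $\RR$ does what is claimed, splitting the work into two parts: the algebraic identity $\diver \RR(fg_\lambda) = fg_\lambda - \int fg_\lambda$, and the quantitative bound \eqref{ts:antidiv}. For the identity, I would compute $\diver$ of each of the three terms in $\RR(fg_\lambda) = f\nabla\Delta^{-1}g_\lambda - \nabla\Delta^{-1}\bigl(\nabla f\cdot\nabla\Delta^{-1}g_\lambda + \int fg_\lambda\bigr)$. The first term gives $\diver(f\nabla\Delta^{-1}g_\lambda) = \nabla f\cdot\nabla\Delta^{-1}g_\lambda + f\,\Delta\Delta^{-1}g_\lambda = \nabla f\cdot\nabla\Delta^{-1}g_\lambda + fg_\lambda$ (using $\int g_\lambda = 0$ so that $\Delta^{-1}g_\lambda$ is well-defined and $\Delta\Delta^{-1}g_\lambda = g_\lambda$). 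The second term gives $\diver\nabla\Delta^{-1}(\cdots) = \Delta\Delta^{-1}(\cdots) = \nabla f\cdot\nabla\Delta^{-1}g_\lambda + \int fg_\lambda$, but only after checking that the argument $\nabla f\cdot\nabla\Delta^{-1}g_\lambda + \int fg_\lambda$ has zero mean — which it does, since $\int \nabla f\cdot\nabla\Delta^{-1}g_\lambda = -\int f\,\Delta\Delta^{-1}g_\lambda = -\int fg_\lambda$ by integration by parts on the torus. Subtracting, the $\nabla f\cdot\nabla\Delta^{-1}g_\lambda$ terms cancel and we are left with $fg_\lambda - \int fg_\lambda$.

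For the estimate \eqref{ts:antidiv}, the key input is the scaling behaviour of $\nabla\Delta^{-1}$ acting on the high-frequency function $g_\lambda = g(\lambda\cdot)$. The point is that if $h$ has zero mean then $\|D^k \nabla\Delta^{-1} h_\lambda\|_{L^p} \lesssim \lambda^{k-1}\|h\|_{W^{k,p}}$, which follows from writing $\nabla\Delta^{-1}g_\lambda(x) = \lambda^{-1}(\nabla\Delta^{-1}g)(\lambda x)$ and differentiating, using that $\nabla\Delta^{-1}$ is a bounded operator from $W^{k,p}$ to $W^{k+1,p}$ on mean-zero functions (standard Calderón–Zygmund / Fourier multiplier theory on $\T^d$, with the caveat that one may need $g$ itself to have mean zero, which is exactly the hypothesis). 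I would then estimate each of the three terms of $\RR(fg_\lambda)$ by Leibniz: the first term $f\nabla\Delta^{-1}g_\lambda$ contributes $\sum_{j\le k}\|D^j f\|_\infty\|D^{k-j}\nabla\Delta^{-1}g_\lambda\|_{L^p} \lesssim \|f\|_{C^k}\lambda^{k-1}\|g\|_{W^{k,p}}$; the third (constant) term $\nabla\Delta^{-1}(\int fg_\lambda)$ is actually zero since $\nabla\Delta^{-1}$ annihilates constants, but keeping it is harmless; and the middle term $\nabla\Delta^{-1}(\nabla f\cdot\nabla\Delta^{-1}g_\lambda)$ is handled by first bounding $\nabla\Delta^{-1}:W^{k,p}\to W^{k,p}$ (gaining nothing but losing nothing at this order) and then Leibniz on $\nabla f\cdot\nabla\Delta^{-1}g_\lambda$, which costs one more derivative on $f$ — hence the $\|f\|_{C^{k+1}}$ rather than $\|f\|_{C^k}$ on the right-hand side — and again a factor $\lambda^{k-1}$ from the worst term where all $k$ derivatives hit $\nabla\Delta^{-1}g_\lambda$.

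The main obstacle, and the only genuinely delicate point, is the boundedness of $\nabla\Delta^{-1}$ (and compositions like $\nabla\Delta^{-1}(\nabla f \cdot \nabla\Delta^{-1}g_\lambda)$) on $L^p$ for the endpoint exponents $p\in\{1,\infty\}$, where Calderón–Zygmund operators are not bounded on $L^p$ in general. This is the reason the lemma is stated as a citation of \cite[Lemma 2.3]{ML18} and \cite[Lemma 3.5]{MS20}: the resolution there is that the operators appearing are not arbitrary singular integrals but have extra structure — the argument $\nabla f\cdot\nabla\Delta^{-1}g_\lambda$ is smooth (it is a product of smooth functions), so one never actually applies $\nabla\Delta^{-1}$ to a mere $L^p$ function but to a smooth one, and on smooth mean-zero functions one can run the estimate via, e.g., writing the kernel explicitly and using that smoothness of the argument compensates the endpoint failure, or by interpolating from $L^2$ bounds together with $C^0$–$C^0$ type bounds. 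I would therefore simply invoke the cited lemmas for the operator-boundedness facts and concentrate the present proof on the scaling bookkeeping (the powers of $\lambda$) and the Leibniz rule, which is where the stated exponents $\lambda^{k-1}$, $\|f\|_{C^{k+1}}$, $\|g\|_{W^{k,p}}$ come from.
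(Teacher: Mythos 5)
Your proof is correct, and it is essentially a self-contained version of the argument that the paper delegates entirely to the citation of \cite[Lemma 3.5]{MS20} (the paper's ``proof'' is one line: combine that lemma with a remark in the same reference). Both the divergence identity and the Leibniz/scaling bookkeeping giving $\lambda^{k-1}\|f\|_{C^{k+1}}\|g\|_{W^{k,p}}$ are exactly the standard route. One clarification: the point you single out as genuinely delicate --- boundedness of $\nabla\Delta^{-1}$ on $L^p$ for $p\in\{1,\infty\}$ --- is not actually a Calder\'on--Zygmund issue here. The operator $\nabla\Delta^{-1}$ has order $-1$; its kernel on $\T^d$ behaves like $|x|^{1-d}$ and is therefore integrable, so $\|\nabla\Delta^{-1}h\|_{L^p}\lesssim \|h\|_{L^p}$ for \emph{every} $p\in[1,\infty]$ by Young's convolution inequality, with no smoothness of $h$ needed. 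The genuinely unbounded-at-the-endpoints operator would be $D^2\Delta^{-1}$, and the whole design of $\RR$ (commuting $D^k$ past $\nabla\Delta^{-1}$ so that at most one derivative ever lands on $\Delta^{-1}$) is precisely what keeps it from appearing. So you can drop the hedge and the appeal to the cited lemmas for operator boundedness; your argument closes on its own.
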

\begin{proof}
	It is enough to combine \cite[Lemma 3.5]{MS20} and the remark in \cite[page 12]{MS20}.
\end{proof}

\subsection{Slow and fast variables}
Finally we recall the following improved H\"older inequality, stated as in \cite[Lemma 2.6]{ML18} (see also \cite[Lemma 3.7]{BV18}).
If $\lambda \in \N$ and $f,g:\T^d \to \R$ are smooth functions, then we have 
\begin{equation}
\label{eqn:impr-holder}
\| f(x) g(\lambda x) \|_{L^p} \leq \| f \|_{L^p} \| g \|_{L^p} + \frac{C(p)\sqrt d \|f \|_{C^1} \|g\|_{L^p}}{\lambda^{1/p}}
\end{equation}
and 
\begin{equation}
\label{eqn:l26}
\Big| \int f(x) g(\lambda x) \, dx \Big| \leq\Big| \int f(x) \Big(g(\lambda x) - \int g \Big) \, dx \Big| + \Big| \int f \Big |\cdot \Big| \int g \Big| \leq \frac{\sqrt d \|f \|_{C^1} \|g\|_{L^1}}{\lambda} + \Big| \int f \Big| \cdot \Big | \int g \Big| .
\end{equation}

\subsection{Building blocks} \label{sec:building}
The building blocks are the same as those of \cite[Section 4]{BDLC20}. We recall them here for the convenience of the reader. 

Let $0<\rho<\frac 14$ be a constant
. We consider $\varphi\in C^\infty_c (B_\rho)$ and $\psi\in C^\infty_c (B_{2\rho})$ which satisfy
	$$\int \varphi =1, \qquad \varphi \geq 0, \qquad
	\psi \equiv 1 \mbox{ on }B_\rho.$$
Given $\mu\ll1$ we define the $1$-periodic functions
\begin{align*}
\bar \varphi_\mu (x) &:= \sum_{k\in \mathbb Z^d} \mu^{d/p} \varphi (\mu (x+k))\\
\bar \psi_\mu (x) &:= \sum_{k\in \mathbb Z^d} \mu^{d/p'} \psi (\mu (x+k))\, .
\end{align*}
Let $\omega: \mathbb R^d \to \mathbb R$ be a  smooth $1$-periodic function such that $\omega(x)  = x\cdot \xi'$ on $B_{2\rho}(0)$.

Given $\Lambda$ as in Lemma~\ref{l_geom}, for any $\xi\in \Lambda$ we chose $\xi'\in \partial B_1$ such that $\xi\cdot \xi'=0$ and we define
\[
\Omega_{\xi}^\mu(x):= \mu^{-1} \omega(\mu\, x) ( \xi\otimes \xi' - \xi'\otimes \xi ).
\]
Notice that $\diver \Omega^\mu_{\xi}$ is divergence-free since $\Omega_{\xi}^\mu$ is skew-symmetric and $\diver \Omega_{\xi}^\mu=\xi$ on $\supp (\bar \psi_\mu)$ and $\supp (\bar \varphi_{\mu})$.

For $\sigma>0$ we set 
\begin{align} \label{d_building_apriori}
\tilde W_{\xi, \mu, \sigma} (t,x) &:= \sigma^{1/p'}  \diver \left[ (\Omega_{\xi}^\mu \bar \psi_\mu) (x- \mu^{d/p'} \sigma^{1/p'} t \xi )\right]\\
\tilde \Theta_{\xi, \mu, \sigma} (t,x) &:= \sigma^{1/p} \bar \varphi_\mu (x - \mu^{d/p'} \sigma^{1/p'} t \xi )\, .
\end{align}
Notice that $\tilde W_{\xi, \mu, \sigma} $ is divergence-free since it is also the divergence of the skew-symmetric matrix $\Omega_{\xi}^\mu \bar \psi_\mu$. By construction we have 
\[
\tilde W_{\xi, \mu, \sigma} (t,x)=\sigma^{1/p'}\left[ \bar \psi_{\mu} \xi + \Omega_{\xi}^{\mu} \cdot \nabla \bar \psi_{\mu}  \right](x-\mu^{d/p'} \sigma^{1/p'} t \xi ),
\]
hence the following properties are easily verified.
If we consider the translations
$$
W_{\xi, \mu, \sigma}(t, x) =\tilde W_{\xi, \mu, \sigma}(t, x-v_\xi), \qquad
\tilde\Theta_{\xi, \mu, \sigma} (t,x) = \tilde\Theta_{\xi, \mu, \sigma}(t, x-v_\xi),
$$
we can prove the following result.

\begin{lemma} \label{l_buildingblocks}
Let $d \geq 3$, $\Lambda \subset \partial B_1 \cap \Q^d$ be a finite set. Then there exists $\mu_0 >0$ such that the following holds.

 There exist two families of functions
$\{ \Theta_{\xi, \mu,\sigma} \}_{\xi, \mu, \sigma} \subset C^\infty(\T^d)$, $\{ W_{\xi, \mu,\sigma} \}_{\xi, \mu, \sigma} \subset C^\infty(\T^d;\R^d)$, where $\xi \in \Lambda, \sigma, \mu \in \R$ such that 
 for any  $\mu \geq \mu_0$, $\sigma >0$  we have
 
 \begin{equation}\label{eqn:itsolves}
	\partial_t \Theta_{\xi, \mu, \sigma}+\diver ( W_{\xi, \mu, \sigma} \Theta_{\xi, \mu, \sigma})=0,
	\end{equation}
 $$\diver W_{\xi, \mu, \sigma}=0,$$
 \begin{equation}
 \label{eqn:avW}
 \int  W_{\xi, \mu, \sigma}=0,
 \end{equation}
	\begin{equation}\label{eqn:rightaverage}
	\int  W_{\xi, \mu, \sigma} \Theta_{\xi, \mu, \sigma}= \sigma \xi.
	\end{equation}
For any $k\in \N$ and any $s\in [1,\infty]$ one has 
\begin{equation}\label{eqn:Thetanorms}
\| D^k \Theta_{\xi, \mu, \sigma} \|_{L^s}\le C(d,k,s) \sigma^{1/p} \mu^{k + d ( 1/p - 1/s )},
\qquad
\|\partial_t^k \Theta_{\xi, \mu, \sigma}\|_{L^s} \le C(d,k,s) \sigma^{1 + \frac{k - 1}{ p' }} \mu^{k + d ( \frac{ k - 1 }{ p' } + 1 - \frac{1}{s} ) }
\end{equation}

\begin{equation}\label{eqn:Wnorms}
\| D^k  W_{\xi,\mu,\sigma} \|_{L^s} \le C(d,k,s) \sigma^{1/p'} \mu^{ k+ d(1/p'-1/s)},
\qquad
\| \partial_t^k   W_{\xi,\mu,\sigma} \|_{L^s}\le C(d,k,s) \sigma^{\frac{k + 1}{ p' }} \mu^{ k + d(\frac{k+1}{p'}-\frac{1}{s} ) }.
\end{equation}
Finally, they have pairwise compact disjoint supports for any $\xi \neq \xi'$, namely
\begin{align} \label{eqn:disjointsupp}
{\supp} W_{\xi,\mu,\sigma}   \cap {\supp} \Theta_{\xi',\mu,\sigma} = {\supp} W_{\xi,\mu,\sigma} \cap {\supp} W_{\xi',\mu,\sigma} = {\supp} \Theta_{\xi',\mu,\sigma} \cap {\supp} \Theta_{\xi,\mu,\sigma} = \emptyset,
\end{align}
for any $\xi \neq \xi'$.
\end{lemma}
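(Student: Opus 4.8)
The plan is to read off all the claimed properties directly from the explicit formulae in \eqref{d_building_apriori}. Since $W_{\xi,\mu,\sigma}$ and $\Theta_{\xi,\mu,\sigma}$ are obtained from $\tilde W_{\xi,\mu,\sigma}$ and $\tilde\Theta_{\xi,\mu,\sigma}$ by the fixed spatial translation $x\mapsto x-v_\xi$, and this translation changes neither the continuity equation, nor the divergence, nor any integral or $L^s$-norm over $\T^d$, I would prove \eqref{eqn:itsolves}--\eqref{eqn:Wnorms} for the untranslated fields and invoke the translations only for \eqref{eqn:disjointsupp}. The single observation driving everything is that $\tilde W_{\xi,\mu,\sigma}(t,\cdot)$ is \emph{constant on the support of} $\tilde\Theta_{\xi,\mu,\sigma}(t,\cdot)$, equal there to $\sigma^{1/p'}\mu^{d/p'}\xi$. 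Indeed $\supp\varphi\subset B_\rho$ and $\psi\equiv 1$ on $B_\rho$, so as soon as $\mu$ is large enough that the translates forming $\bar\varphi_\mu$ and $\bar\psi_\mu$ have pairwise disjoint supports, we get $\bar\psi_\mu\equiv\mu^{d/p'}$ and $\nabla\bar\psi_\mu\equiv 0$ on $\supp\bar\varphi_\mu$, and the identity follows from $\tilde W_{\xi,\mu,\sigma}=\sigma^{1/p'}\big[\bar\psi_\mu\xi+\Omega_\xi^\mu\cdot\nabla\bar\psi_\mu\big](x-\mu^{d/p'}\sigma^{1/p'}t\xi)$. This already fixes a first lower bound on $\mu_0$.

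Granting this, the divergence-free property and \eqref{eqn:avW} are immediate: by construction $\tilde W_{\xi,\mu,\sigma}(t,\cdot)$ is the divergence of the smooth skew-symmetric matrix field $x\mapsto(\Omega_\xi^\mu\bar\psi_\mu)(x-\mu^{d/p'}\sigma^{1/p'}t\xi)$, the divergence of a skew-symmetric field is divergence-free, and the integral of any divergence over $\T^d$ vanishes. For \eqref{eqn:itsolves}, $\tilde\Theta_{\xi,\mu,\sigma}(t,x)=\sigma^{1/p}\bar\varphi_\mu(x-\mu^{d/p'}\sigma^{1/p'}t\xi)$ is a travelling wave, so $\partial_t\tilde\Theta_{\xi,\mu,\sigma}=-\mu^{d/p'}\sigma^{1/p'}\,\xi\cdot\nabla\tilde\Theta_{\xi,\mu,\sigma}$; since $\diver\tilde W_{\xi,\mu,\sigma}=0$ one has $\diver(\tilde W_{\xi,\mu,\sigma}\tilde\Theta_{\xi,\mu,\sigma})=\tilde W_{\xi,\mu,\sigma}\cdot\nabla\tilde\Theta_{\xi,\mu,\sigma}$, and on $\supp\nabla\tilde\Theta_{\xi,\mu,\sigma}\subseteq\supp\tilde\Theta_{\xi,\mu,\sigma}$ the observation above replaces $\tilde W_{\xi,\mu,\sigma}$ by $\mu^{d/p'}\sigma^{1/p'}\xi$, so the two terms cancel. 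The same observation gives \eqref{eqn:rightaverage}, since $\int\tilde W_{\xi,\mu,\sigma}\tilde\Theta_{\xi,\mu,\sigma}=\mu^{d/p'}\sigma^{1/p'}\xi\int\tilde\Theta_{\xi,\mu,\sigma}=\mu^{d/p'}\sigma^{1/p'}\xi\cdot\sigma^{1/p}\mu^{d/p-d}=\sigma\xi$, using $\int_{\T^d}\bar\varphi_\mu=\mu^{d/p}\int_{\R^d}\varphi(\mu x)\,dx=\mu^{d/p-d}$ and $1/p+1/p'=1$.

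The norm bounds \eqref{eqn:Thetanorms}--\eqref{eqn:Wnorms} I would obtain by differentiating the explicit formulae and counting powers. With the change of variables $y=\mu x$ and disjointness of the periodized supports, $\|\bar\varphi_\mu\|_{L^s}\sim\mu^{d(1/p-1/s)}$, $\|\bar\psi_\mu\|_{L^s}\sim\mu^{d(1/p'-1/s)}$, and each spatial derivative costs a factor $\mu$; moreover on $\supp\bar\psi_\mu$ one has $\omega(\mu x)=\mu x\cdot\xi'$, so $\Omega_\xi^\mu$ is affine in $x$ there, its first derivative is constant and its higher derivatives vanish, whence the corrector $\Omega_\xi^\mu\cdot\nabla\bar\psi_\mu$ obeys, after any number of spatial derivatives, the same $L^s$ bound as $\bar\psi_\mu$. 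By the travelling-wave structure each time derivative turns into $\mu^{d/p'}\sigma^{1/p'}$ times a directional spatial derivative. Collecting the resulting powers of $\mu$ and $\sigma$ and simplifying with $1/p'=1-1/p$ reproduces exactly the exponents in \eqref{eqn:Thetanorms} and \eqref{eqn:Wnorms}, uniformly in $t$.

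It remains to arrange \eqref{eqn:disjointsupp} by a suitable choice of $\{v_\xi\}_{\xi\in\Lambda}$ and of $\mu_0$. For each $t$, $\supp W_{\xi,\mu,\sigma}(t,\cdot)\cup\supp\Theta_{\xi,\mu,\sigma}(t,\cdot)$ is contained in the ball of radius $2\rho/\mu$ about $\mu^{d/p'}\sigma^{1/p'}t\xi+v_\xi$ in $\T^d$, so for $\xi\neq\xi'$ all three intersections in \eqref{eqn:disjointsupp} are empty as soon as the distance in $\T^d$ between $\mu^{d/p'}\sigma^{1/p'}t(\xi-\xi')+(v_\xi-v_{\xi'})$ and $0$ exceeds $4\rho/\mu$ for every $t\in[0,1]$. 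Since the set $\{\mu^{d/p'}\sigma^{1/p'}t(\xi-\xi'):t\in[0,1]\}$ always lies on the closed geodesic through $0$ with rational direction $\xi-\xi'$, it is enough to pick the finitely many $v_\xi$ so that each difference $v_\xi-v_{\xi'}$ stays at positive distance $\delta_{\xi\xi'}$ from that geodesic, and then to take $\mu_0$ so large that $4\rho/\mu_0<\min_{\xi\neq\xi'}\delta_{\xi\xi'}$ (and so large that the earlier constraints on $\mu_0$ hold). For $d\geq 3$ this can be done; the two-dimensional case needs the separate treatment of Section~\ref{section_dimension2}. This is the step I expect to be the only genuinely delicate point — keeping the moving supports disjoint over the whole interval $[0,1]$ rather than merely at $t=0$ — while everything else is bookkeeping of scaling factors.
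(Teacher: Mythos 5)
Your derivations of \eqref{eqn:itsolves}--\eqref{eqn:Wnorms} are correct and coincide with the standard argument (the paper simply cites \cite{BDLC20} for this part): the single observation that $\tilde W_{\xi,\mu,\sigma}(t,\cdot)\equiv\sigma^{1/p'}\mu^{d/p'}\xi$ on a neighbourhood of $\supp\tilde\Theta_{\xi,\mu,\sigma}(t,\cdot)$ does drive the transport identity, \eqref{eqn:avW} and \eqref{eqn:rightaverage}, and your power counting for \eqref{eqn:Thetanorms}--\eqref{eqn:Wnorms} is right.

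The gap is in the last step, which you yourself single out as the delicate one. Your choice of the translations $v_\xi$ only guarantees that the two moving balls are disjoint \emph{at equal times and equal speeds}: you keep $v_\xi-v_{\xi'}$ away from the single closed geodesic $\{t(\xi-\xi')\bmod\Z^d\}$, which is exactly the set traced by the difference of the two centres when both blocks carry the same $\sigma$. There are two problems with this. First, the condition makes no essential use of $d\ge3$ --- a single closed geodesic does not cover $\T^2$ either --- which is a sign that it is not the right condition. Second, and decisively, the lemma is invoked in \eqref{eqn:formulone_prodotto} to annihilate products $\Theta_{\xi,\mu_{q+1},n/\kappa}\,W_{\xi',\mu_{q+1},m/\kappa}$ with $n\neq m$, i.e.\ blocks travelling at \emph{different} speeds $w\neq w'$; the difference of the centres then moves in the direction $w\xi-w'\xi'$, which is in general an irrational direction, so its orbit is dense in a subtorus of dimension at least $2$ and no choice of $v_\xi-v_{\xi'}$ can keep it at positive distance from $0$. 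Your argument therefore cannot be upgraded to the disjointness the scheme actually relies on. The correct route (the content of Lemma 4.2 of \cite{BDLC20}, which the paper cites as the proof) is to make the full spatial tubes disjoint: for every $t$ and every $\sigma$ one has $\supp W_{\xi,\mu,\sigma}(t,\cdot)\cup\supp\Theta_{\xi,\mu,\sigma}(t,\cdot)\subset\{x:\ d_{\T^d}(x,\,v_\xi+\R\xi)\le 2\rho/\mu\}$, so it suffices to choose the $v_\xi$ so that the closed geodesics $v_\xi+\R\xi$ are pairwise at distance $\ge\delta>0$, and then take $\mu_0>4\rho/\delta$. This is where $d\ge3$ enters: the set of bad translations $\{v:\ (v+\R\xi)\cap(v_{\xi'}+\R\xi')\neq\emptyset\}$ is a two-dimensional subtorus (spanned by $\xi$ and $\xi'$), whose complement is empty in $\T^2$ but nonempty for $d\ge3$; avoiding a neighbourhood of this $2$-torus is strictly stronger than avoiding the geodesic in direction $\xi-\xi'$ that your construction uses. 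So while your argument does yield the equal-$\sigma$ statement literally written in \eqref{eqn:disjointsupp}, it proves strictly less than the pipe-disjointness that the lemma is meant to encode and that the iteration needs.
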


The proof of the previous lemma follows combining \cite[Lemma $4.1$ and Lemma $4.2$]{BDLC20}.

\section{Iteration scheme}\label{section_iteration}
 The convex integration scheme to construct solutions of the continuity equation was first introduced in \cite{ML18}. The scheme was later adapted  in \cite{BDLC20} to construct positive solutions of the continuity equation. To prove Theorem \ref{t_main} we will adapt the convex integration scheme of \cite{BDLC20}. 

First, we define the notion of a family of $a$-open sets. This notion is useful because of the convolution step in the iteration scheme. 

\begin{definition}
Let $N \in \N$, $a \in \R^+$ and $\{A_i \}_{1 \leq i \leq N}$ be a finite family of open sets of $\T^d$. We say that the family $\{A_i \}_{1 \leq i \leq N}$ is $a$-open if for any $i =1,\dots,N$ there exists a ball $B_i$ of radius $\frac{1}{4aN}$ such that $B_i \subset A_i $.
\end{definition}
As in \cite{ML18} we consider the following system of equations in $[0,1] \times \T^d$, where $d \geq 2$,
\begin{equation}\label{eqn:CE-R}
\begin{cases}
\partial_t \rho_{q,i} + \diver (  \rho_{q,i} u_{q})= - \diver R_{q,i}
\\ \\
\diver u_q=0,
\end{cases}
\end{equation}
where the indices are $i,q \in \N$ and $1 \leq i \leq N$.
We then fix three parameters $a_0$, $b>0$ and $\beta>0$, to be chosen later only in terms of $d$, $p$, $r$, and for any choice of $a>a_0$ we define
\[
\lambda_0 =a, \quad \lambda_{q+1} = \lambda_q^b \quad \mbox{and}\quad
\delta_{q} = \lambda_{q}^{-2\beta}\, .
\]
The following proposition builds a converging sequence of functions with the inductive estimates
\begin{equation}
\label{eqn:ie-1}
\max_t \|R_{q,i} (t, \cdot)\|_{L^1} \leq \delta_{q+1}
\end{equation}
\begin{equation}
\label{eqn:ie-2}
\max_t \left(\| \rho_{q,i} (t, \cdot)\|_{C^1} +\|\partial_t \rho_{q,i} (t, \cdot)\|_{C^0} + \|u_q (t, \cdot)\|_{W^{1,p'}} + \| u_q (t, \cdot)\|_{W^{2,r}} + \|\partial_t u_q (t, \cdot)\|_{L^1}\right) 
\leq \lambda_{q}^\alpha\, ,
\end{equation}
for any $i=1,\dots,N$,
where $\alpha$ is yet another positive parameter which will be specified later.

\begin{prop}\label{p_inductive}
Let $d,N\in \N$, $d \geq 3$. There exist $\alpha,b, a_0, M>5$, $0<\beta<(2b)^{-1}$ such that the following holds. For every family $\{A_i \}_{1 \leq i \leq N}$ of $a_0$-open in $\T^d$ and for every $a\geq a_0$, if
	$\{ (\rho_{q,i}, u_q, R_{q,i}) \}_{1\leq i \leq N}$ solve \eqref{eqn:CE-R} and enjoy the estimates \eqref{eqn:ie-1}, \eqref{eqn:ie-2}, then there exist $\{(\rho_{q+1,i}, u_{q+1}, R_{q+1,i}) \}_{1\leq i \leq N}$ which solve \eqref{eqn:CE-R} and enjoy the estimates \eqref{eqn:ie-1}, \eqref{eqn:ie-2} with $q$ replaced by $q+1$. Moreover, for any $i=1,\dots,N$, the following hold:
	\begin{itemize}
		\item[(a)] $ \sup_ {[0,1]}[ \|( \rho_{q+1,i}- \rho_{q,i}) (t, \cdot)\|_{L^p}^p +\|(u_{q+1}- u_q) (t, \cdot)\|_{W^{1,r}}^r +\|(u_{q+1}- u_q) (t, \cdot)\|_{L^{p'}}^{p'} ] \leq M \delta_{q+1}$;
		\item[(b)] the following properties
		$$  \inf_{[0,1] \times (\T^d \setminus A_i)} \rho_{q,i} \geq 0, \qquad \inf_{[0,1] \times A_i} \rho_{q,i} \geq c >0, $$
		imply 
		$$  \inf_{[0,1] \times (\T^d \setminus A_i)} \rho_{q+1,i} \geq 0, \qquad \inf_{[0,1] \times A_i} \rho_{q+1,i} \geq c- \delta_{q+1}, $$
		\item[(c)] if for some $t_0>0$ we have that $\rho_{q,i}(t, \cdot) = 1$, $R_{q,i}(t, \cdot)=0$ and $u_q(t, \cdot)=0$ for every $t\in [0,t_0]$, then $\rho_{q+1,i}(t, \cdot) = 1$, $R_{q+1,i}(t, \cdot)=0$ and $u_{q+1}(t, \cdot)=0$ for every $t\in [0,t_0- \lambda_q^{-1-\alpha}]$,
		\item [(d)] if for some $t_0>0$ we have that $\supp \rho_{q,i} (t, \cdot) \subset B_i$, $R_{q,i}(t, \cdot)=0$ and $u_q(t, \cdot)=0$ for every $t\in [t_0,1]$, then $\supp \rho_{q+1,i}(t, \cdot)  \subset B_{i, \lambda_q^{-1- \alpha}}$, $R_{q+1,i}(t, \cdot)=0$ and $u_{q+1}(t, \cdot)=0$ for every $t\in [t_0+ \lambda_q^{-1-\alpha},1]$,
		where $ B_{i, \lambda_q^{-1- \alpha}} : = \{ x \in \T^d: \dist (x, B_i) < \lambda_{q}^{-1-\alpha}  \}$.
	\end{itemize}
\end{prop}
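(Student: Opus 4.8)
\textbf{Proof strategy for Proposition \ref{p_inductive}.} The plan is to run one step of a convex integration iteration. Given the tuple $\{(\rho_{q,i}, u_q, R_{q,i})\}_i$ solving \eqref{eqn:CE-R} with the inductive estimates, I would first mollify in space (at scale $\ell_q$, a small power of $\lambda_{q+1}^{-1}$) and time the densities, the vector field and the Reynolds errors, obtaining $\{(\bar\rho_{q,i}, \bar u_q, \bar R_{q,i})\}_i$; the mollification scale is chosen so that the mollified error still satisfies a slightly worse version of \eqref{eqn:ie-1} while all higher-order norms are controlled by negative powers of $\ell_q$. Here the $a_0$-openness of $\{A_i\}$ is used: after mollification the sets $A_i$ still contain balls of comparable size, and the building blocks associated to the index $i$ will be supported (in space) inside (a slight enlargement of) the corresponding ball $B_i$, which is what keeps the perturbations of different densities from interacting and makes (b), (c), (d) tractable.

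Next I would define the perturbations. For each $i$, write $-\bar R_{q,i} = \sum_{\xi\in\Lambda^i} \bar R_{q,i}\, a_\xi^2(-\bar R_{q,i}/|\bar R_{q,i}|)\,\xi$ via the geometric Lemma \ref{l_geom}, using the family $\Lambda^i$ for density $i$; since the families $\Lambda^i$ are pairwise disjoint and the building blocks from Lemma \ref{l_buildingblocks} have pairwise disjoint supports for distinct $\xi$, the building block $W_{\xi,\mu,\sigma}$ with $\xi\in\Lambda^i$ interacts only with $\Theta_{\xi,\mu,\sigma}$ and contributes only to the $i$-th density's error. Set the density perturbation as a principal part $\vartheta_{q+1,i} = \sum_\xi c_{\xi,i}\,\Theta_{\xi,\mu,\sigma}$ (with amplitudes built from $\bar R_{q,i}$ and cutoffs) plus a corrector that enforces zero spatial mean; crucially I would localize this corrector in space (cf.\ \eqref{eqn:localize_space} in the paper) so that the (possibly negative) corrector is supported inside $A_i$, giving the asymmetric bound in (b). The single vector-field perturbation is $w_{q+1} = \sum_i \sum_{\xi\in\Lambda^i} c_{\xi,i}\,W_{\xi,\mu,\sigma}$ plus the divergence-free corrector needed to keep $\diver u_{q+1}=0$; one sets $u_{q+1} = \bar u_q + w_{q+1}$, $\rho_{q+1,i} = \bar\rho_{q,i} + \vartheta_{q+1,i}$. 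The parameters $\mu,\sigma$ are tied to $\lambda_{q+1}$ and $\delta_{q+1},\delta_{q+2}$ in the standard way.

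The new Reynolds error $R_{q+1,i}$ is read off from \eqref{eqn:CE-R}: it collects the \emph{oscillation} error $\vartheta_{q+1,i} w_{q+1} - $ (its relevant low-frequency part), which by the key product identity \eqref{eqn:formulone_prodotto} and \eqref{eqn:rightaverage} cancels $\bar R_{q,i}$ up to high-frequency terms and cross terms that vanish by disjointness of supports; the \emph{transport} error $\partial_t\vartheta_{q+1,i} + \diver(\bar u_q \vartheta_{q+1,i} + \bar\rho_{q,i} w_{q+1})$; and the mollification error $\bar R_{q,i} - R_{q,i}\ast(\text{mollifiers})$. Each piece is made to have $L^1$ norm $\le \delta_{q+2}$ by choosing the antidivergence operator $\RR$ of Lemma \ref{lemma23} on the high-frequency terms (which gains a factor $\mu^{-1}\sim\lambda_{q+1}^{-1}$) and by using the improved H\"older inequality \eqref{eqn:impr-holder}; the estimates \eqref{eqn:Thetanorms}, \eqref{eqn:Wnorms} plus the constraint $\frac1p+\frac1r>1+\frac1d$ give exactly the room needed to close \eqref{eqn:ie-1} and \eqref{eqn:ie-2} at level $q+1$ and to prove (a). Properties (c) and (d) follow from the finite speed of propagation of the building blocks (their support travels at speed $\sim\mu^{d/p'}\sigma^{1/p'}$, bounded by $\lambda_q^{1+\alpha}$ after the parameter choices) together with the fact that the perturbation vanishes wherever $\bar R_{q,i}=0$: if $\rho_{q,i}\equiv 1$, $R_{q,i}=0$, $u_q=0$ on $[0,t_0]$, then after mollification this persists on a slightly shorter interval and the perturbation is identically zero there.

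\textbf{Main obstacle.} The delicate point is the simultaneous bookkeeping: a \emph{single} vector field must serve all $N$ continuity equations at once, so the oscillation cancellation must be arranged index by index without cross-interference. This is exactly where the disjointness of the families $\Lambda^i$ and the disjoint-support property \eqref{eqn:disjointsupp} of Lemma \ref{l_buildingblocks} enter, yielding the clean product identity \eqref{eqn:formulone_prodotto}: $w_{q+1}\,\vartheta_{q+1,i}$ only sees the blocks indexed by $\Lambda^i$. The second subtlety is preserving positivity with the prescribed deficit $\delta_{q+1}$ in (b): the principal part $\Theta_{\xi,\mu,\sigma}\ge 0$ is harmless, but the mean-zero corrector is signed, so it must be spatially confined to $A_i$ and its $C^0$ norm controlled by $\delta_{q+1}$ — this forces the localization \eqref{eqn:localize_space} and a careful choice of the cutoff scales relative to the size of the balls guaranteed by $a_0$-openness. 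Everything else is a routine, if lengthy, verification of the inductive estimates with the parameter hierarchy $\beta \ll 1/b \ll 1$, $\alpha$ small, $a_0$ large.
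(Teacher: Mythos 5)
Your proposal follows essentially the same route as the paper: mollification at scale $\ell$, principal perturbations built from the geometric Lemma~\ref{l_geom} with $N$ pairwise disjoint direction families so that the single vector field's oscillation interacts only with the matching density (the key identity \eqref{eqn:formulone_prodotto}), a spatially localized mean-zero corrector supported in $A_i$ to obtain the asymmetric positivity bound in (b), and the standard quadratic/transport/mollification decomposition of the new error with the antidivergence gain of $\lambda_{q+1}^{-1}$, while (c) and (d) follow from the vanishing of the perturbation where $R_{\ell,i}=0$. One slip worth noting: the building blocks associated to index $i$ are \emph{not} supported inside an enlargement of $B_i$ — they are periodic families of small balls and tubes spread over all of $\T^d$, since they must cancel $R_{\ell,i}$ wherever it is large; only the corrector $\theta^{(c)}_{q+1,i}$ is confined to $A_i$ via the cutoff $g_i$ (this is where the $a_0$-openness enters, through the bound on $\|g_i\|_{C^S}$), and the non-interaction of distinct densities comes solely from the disjointness of the families $\Lambda^i$ together with \eqref{eqn:disjointsupp}, exactly as you state correctly in your final paragraph.
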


\begin{remark}
We highlight that the constant $a_0$ in the proposition above does not depend on the sets $A_i$ 
but only on the number $N$. 
Therefore, when we apply this proposition (to prove Theorem \ref{p_comp_supp}), we choose the sets $A_i$ after having fixed $a_0$.
\end{remark}

To prove Proposition \ref{p_inductive} we use a convex integration scheme similar to the one in \cite[Proposition 2.1]{BDLC20}. However, the end products of our scheme are different from those of \cite{BDLC20}. Indeed, we seek to produce a \emph{single} vector field $u$ and $N$ densities $\rho_i$ with mutually disjoint compact supports for some time such that $(\rho_i, u)$ weakly solves \eqref{eq_continuity}. Accordingly, we modified the iterative proposition of \cite{BDLC20} in two essential ways: we index $N$ distinct densities $\rho_{q,i}$ by the parameter $i=1,\dots, N$; we have refined the control $\inf \rho_{q+1,i}$ over the subregion $\T^d\setminus A_i$ thanks to (b). The former is achieved  by taking $N$ disjoint families of building blocks $\{\Lambda_i\}_{1\leq i\leq N}$, and the latter by localizing to $A_i$ the corrector part of the perturbation to $\rho_{q,i}$.

\subsection{Choice of the parameters}\label{sec:param}
The choice of parameter is the same of \cite[Section 5.1]{BDLC20}.
We define first the constant
\[
\gamma:= \Big(1+\frac 1p\Big) \left(\min \Big\{ \frac{d}{p}, \frac{d}{p'}, -1-d  \Big(\frac1{p'}-\frac1r\Big)\Big\}\right)^{-1}>0,
\]
Notice that, up to enlarging $r$, we can assume that the quantity in the previous line is less than $1/2$, namely that $\gamma>2$.
Hence we set $\alpha:=4 + \gamma (d+1)$,
\begin{equation}
\label{eqn:choice-b}
b := \max\{ p, p'\} (3(1+\alpha)(d+2)+2),
\end{equation}
and 
\begin{equation}
\label{eqn:choice-beta}
\beta:= \frac 1{2 b} \min\Big\{ p, p', r, \frac{1}{b+1}\Big\}= \frac{1}{2 b(b+1)} .
\end{equation}
Finally, we choose $a_0$ and $M$ sufficiently large (possibly depending on all previously fixed parameters) to absorb numerical constants in the inequalities.
We set
\begin{equation}
\label{eqn:choice-ell}
\ell: = \lambda_{q}^{-1-\alpha},
\end{equation}
\begin{equation}
\label{eqn:choice-mu}\mu_{q+1} := \lambda_{q+1}^{\gamma}.
\end{equation}

\subsection{Convolution}
The convolution step is the same of \cite[Section 5.2]{BDLC20}. We just write here the definitions.
We first perform a convolution of $\rho_q$ and $u_q$ to have estimates on more than one derivative of these objects and of the corresponding error. Let $\phi \in C^\infty_c(B_1)$ be a standard convolution kernel in space-time, $\ell$ as in \eqref{eqn:choice-ell} and define 
$$\rho_{\ell,i} := \rho_{q,i} \ast \phi_\ell, \qquad u_\ell := u_q \ast \phi_\ell, \qquad R_{\ell,i} := R_{q,i} \ast \phi_\ell. $$
We observe that $(\rho_{\ell,i}, u_\ell, R_{\ell,i}+ (\rho_{q,i} u_q)_\ell - \rho_{\ell,i} u_\ell)$ solves system \eqref{eqn:CE-R} for any $i =1,\dots,N$ 
and  by \eqref{eqn:ie-1}, \eqref{eqn:choice-beta} enjoys the following estimates

\begin{equation}
\label{eqn:r-l-l1}
\|R_{\ell,i} \|_{L^1} \leq \delta_{q+1},
\end{equation}
\begin{equation} \label{eqn:rho-ell-conv}
\| \rho_{\ell,i} - \rho_{q,i}\|_{L^p} \leq  \ell \|\rho_{q,i}\|_{C^1} \leq  \ell \lambda_q^\alpha  \leq \lambda_q^{-1} \leq  \frac{\delta_{q+1}^{1/p}}{2},
\end{equation}
$$\| u_\ell - u_q\|_{L^{p'}} \leq C \ell \lambda_q^\alpha \leq C \delta_{q+1}^{1/{p'}},$$
$$\| u_\ell - u_q\|_{W^{1,r}} \leq C \ell \lambda_q^\alpha \leq C \delta_{q+1}^{1/{r}}\, .$$
Indeed note that by \eqref{eqn:choice-beta}
\[
\ell \lambda_q^\alpha = \lambda_q^{-1} = \delta_{q+1}^{\frac{1}{2b\beta}} \leq
\delta_{q+1}^{\max\{1/p, 1/p', 1/r\}}.
\]
Next observe that
\[
\|\partial_t^S \rho_{\ell,i} \|_{C^0} + \| \rho_{\ell,i} \|_{C^S} +\| u_\ell\|_{W^{1+S,r}} + \|\partial_t^S u_\ell\|_{W^{1,r}} 
\leq  C(S) \ell^{-S+1}( \| \rho_{q,i}\|_{C^1} +\| u_q\|_{W^{2,r}} 
)\leq  C(S) \ell^{-S+1}\lambda_{q}^\alpha
\]
for every $S\in \N\setminus\{0\}$ and for every $i=1,\dots,N$.
Using the Sobolev embedding $W^{d,r}\subset {W^{d,1}\subset }C^0$ we then conclude
\[
\|\partial_t^S u_\ell\|_{C^0} + \|u_\ell\|_{C^S} \leq C (S) \ell^{-S-d+2} \lambda_q^\alpha\, .
\]
By Young's inequality we estimate the higher derivatives of $R_{\ell,i}$ in terms of $\|R_{q,i} \|_{L^1}$ to get
\begin{equation}\label{e:D_tR}
\| R_{\ell,i}\|_{C^S} +\|\partial_t^S R_{\ell,i} \|_{C^0} 
\leq \|D^S \rho_{\ell ,i} \|_{L^\infty} \|R_{q,i} \|_{L^1} \leq  C(S) \ell^{-S-d} \leq  C(S) \lambda_{q}^{(1+\alpha)(d+S)}\, 
\end{equation}
for every $L \in \N$ and $i =1,\dots,N$. 
Finally, thanks to \cite[Lemma 5.1]{BDLC20} for the last part of the error we have
\begin{equation}\label{e:commutatore}
\|(\rho_{q,i} u_q)_\ell - \rho_{\ell,i} u_\ell\|_{L^1} \leq C \ell^2 \lambda_q^{2\alpha} \leq \frac{1}{4} \delta_{q+2} ,
\end{equation}
where we have assumed that $a$ is sufficiently large.

\subsection{Definition of the perturbation}

Let $\mu_{q+1}>0$ be as in \eqref{eqn:choice-mu} and let $\chi\in C^\infty_c (-\frac{3}{4}, \frac{3}{4})$ such that $\sum_{n \in \mathbb Z} \chi (\tau - n) =1$ for every $\tau\in \mathbb R$.
Let $ \overline{\chi}\in C^{\infty}_c(-\frac{4}{5}, \frac{4}{5})$ be a nonnegative function satisfying $\overline{\chi} =1$ on $[-\frac{3}{4}, \frac{3}{4}]$.
Notice that $\sum_{n\in \mathbb{Z}} \overline{\chi} (\tau-n)\in [1,2]$ and  $\chi\cdot \overline{\chi}= \chi$.

Fix a parameter $\kappa={40 p'
}/{\delta_{q+2}}$ and consider  $2N$ disjoint families $\{\Lambda^1_i\}_{1 \leq i \leq N}$, $\{\Lambda^2_i\}_{1 \leq i \leq N}$ as in Lemma \ref{l_geom}. Next, for $n \in \N$, define $[n]$ to be $1$ or $2$ depending on the congruence class of $n$. Finally, we take our building blocks according to Lemma \ref{l_buildingblocks} with $ \Lambda= \cup_{i=1}^2 \cup_{j=1}^N \Lambda^i_j$ and observe that their spatial supports are disjoint.
We define the new density and vector field by adding to $\rho_\ell$ and $u_\ell$  principal terms and 
correctors, namely we set
\begin{align*}
\rho_{q+1,i} &:= \rho_{\ell,i} + \theta_{q+1,i}^{(p)}+ \theta_{q+1,i}^{(c)}\,,\\ 
u_{q+1}  &:= u_\ell+ \sum_{i=1}^N (w_{q+1,i}^{(p)}+ w_{q+1,i}^{(c)} ) \  .
\end{align*}
The principal perturbations are given, respectively, by
\begin{align}
\label{eqn:theta}
w^{(p)}_{q+1,i} (t,x) &= \sum_{n \geq 12}\overline{\chi} (\kappa |R_{\ell,i} (t,x)| -n) \sum_{\xi\in \Lambda_i^{[n]}} W_{\xi , \mu_{q+1}, n /\kappa} (\lambda_{q+1} t, \lambda_{q+1} x),  \\ 
\label{eqn:w}
\theta^{(p)}_{q+1,i} (t,x) &= \sum_{n \ge 12} \chi (\kappa |R_{\ell,i} (t,x)| - n) \sum_{\xi\in \Lambda_i^{[n]}}  a_{\xi}\left(\frac{R_{\ell,i}(t,x)}{|R_{\ell,i}(t,x)|}\right) \Theta_{\xi, \mu_{q+1}, n /\kappa} (\lambda_{q+1} t, \lambda_{q+1} x)\, ,
\end{align} 
where it is understood that the terms in the second sum vanish at points where $R_{\ell,i}$ vanishes. 
	In the definition of $w^{(p)}_{q+1,i}$ and $\theta^{(p)}_{q+1,i}$ the first sum runs for $n$ in the range 
	\begin{equation}
	\label{remark:sum is finite}
		12\leq n \leq  C\ell^{-d}\delta_{q+2}^{-1} \le C \lambda_{q}^{d(1+\alpha) + 2\beta b^2}
	\leq C \lambda_{q}^{d(1+\alpha)+1} \leq \lambda_{q}^{d(1+\alpha)+2},
	\end{equation}
	where the last holds providing $a_0\geq C$.
	Indeed $\chi(\kappa |R_{\ell,i}(t,x)|-n)=0$ if $n\ge 20
	\delta_{q+2}^{-1}\|R_{\ell,i}\|_{C^0}+1$ and by \eqref{e:D_tR} we obtain an upper bound for $n$.

The aim of the corrector term for the density is to ensure that the overall perturbation has zero average. So we set
\begin{equation} \label{eqn:localize_space}
\theta^{(c)}_{q+1,i} (t,x):=- g_i(x) \int \theta_{q+1,i}^{(p)}(t,x)\, dx,
\end{equation}
where $\{g_i\}_{1 \leq i \leq N} \subset C^\infty(\T^d)$ such that $\int_{\T^d} g_i =1$, $g_i \geq 0$ and supp$(g_i)$ is compactly contained on $A_i$ and $\| g_i \|_{C^S} \lesssim (\lambda_0 N)^{d+S} $ (where $\lesssim$ means inequality up to a geometric constant depending only on $d$). Here we used the property that the family $\{A_i\}_{1 \leq i \leq N}$ is $\lambda_0$-open.  
We observe that the functions  $\{g_i\}_{1\leq i\leq N}$ do not depend on $q$, they only depend only on the fixed open sets $\{A_i \}_{1 \leq i \leq N}$.
The aim of the corrector term for the vector field is to ensure that the overall perturbation has zero divergence. Thanks to \eqref{eqn:avW}, we can apply Lemma~\ref{lemma23} to define

\[
w_{q+1,i}^{(c)}:=-\sum_{n \geq 12} \sum_{\xi \in \Lambda_i^{[n]}} \mathcal{R} 
\left[ \nabla \overline{\chi}(\kappa |R_{\ell,i}(t,x)|-n) \cdot W_{\xi,\mu_{q+1},n/\kappa}(\lambda_{q+1} t, \lambda_{q+1} x) \right]
\]
Moreover, since $W_{\xi,\mu_{q+1},n/\kappa}$ is divergence-free, the argument inside $\mathcal{R}$ has $0$ average for every $t\geq 0$, and so $w_{q+1,i}^{(c)}$ is indeed well defined. 
Notice finally that the perturbations equals $0$ on every time interval  where $R_{\ell,i}$ vanishes identically for any $i=1,\dots,N$.

\section{Proof of Proposition \ref{p_inductive}}

For the sake of readability, the quantifier ``for every $i=1,\dots, N$'' will be implicit in the rest of this paper. 
Before coming to the main arguments, we recall \cite[Lemma 6.1]{BDLC20}  for the ``slowly varying coefficients''.

\begin{lemma}\label{l:uglylemma}
	For $m\in \mathbb N$, $S\in \mathbb N\setminus \{0\}$ and $n\ge 2$ we have
	\begin{align*}
	&\|\partial^m_t \chi (\kappa|R_{\ell,i}|-n)\|_{C^S}+\|\partial^m_t\bar \chi (\kappa|R_{\ell,i}|-n)\|_{C^S} \leq C (m, S) \delta_{q+2}^{-2(S+m)} \ell^{-(S+m)(1+d)} 
	\leq 
	C(m,S) \lambda_q^{(S+m) (d+2) (1+\alpha)}\\
	&\|\partial^m_t (a_\xi ({\textstyle{\frac{R_{\ell,i}}{|R_{\ell,i}|}}}))\|_{C^S}\leq C (m, S) \delta_{q+2}^{-S-m} \ell^{-(S+m)(1+d)} 
	\leq C(m,S)\lambda_q^{(S+m) (d+2) (1+\alpha)} \qquad\mbox{on $\{\chi (\kappa|R_{\ell,i}|-n) >0\}$}.
	\end{align*}
\end{lemma}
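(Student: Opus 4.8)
The statement to prove is Lemma~\ref{l:uglylemma}, which provides $C^S$-bounds (in space, with $m$ time derivatives) on the ``slowly varying coefficients'' $\chi(\kappa|R_{\ell,i}|-n)$, $\bar\chi(\kappa|R_{\ell,i}|-n)$, and $a_\xi(R_{\ell,i}/|R_{\ell,i}|)$.

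My plan would be as follows.

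\textbf{Setup and strategy.} The three functions to be estimated are all of the form $H\bigl(\kappa R_{\ell,i}(t,x)\bigr)$ (or $H(R_{\ell,i}/|R_{\ell,i}|)$) for a fixed smooth function $H$ — namely $\chi$, $\bar\chi$, or the smooth coefficient map $R\mapsto a_\xi(R/|R|)$, which is smooth away from the origin and in particular smooth on the region $\{\chi(\kappa|R_{\ell,i}|-n)>0\}$, where $|\kappa|R_{\ell,i}| - n|<3/4$ forces $\kappa|R_{\ell,i}|\ge n - 3/4 \ge 2 - 3/4 >0$ (using $n\ge 2$), so the argument stays bounded away from $0$. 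The natural tool is the Faà di Bruno / chain rule for compositions: a space-time derivative $\partial_t^m D^S$ of $H(\kappa R_{\ell,i})$ is a sum of products of derivatives of $H$ evaluated at $\kappa R_{\ell,i}$, times products of space-time derivatives of $\kappa R_{\ell,i}$, with the total order of derivatives falling on $R_{\ell,i}$ equal to $S+m$. So the estimate reduces to (i) bounding the derivatives of $H$, which is a fixed constant $C(m,S)$, times (ii) a product of terms of the form $\kappa \|\partial_t^{a}D^{b} R_{\ell,i}\|_{C^0}$ with $a\le m$, $b\le S$, and (iii) in the $a_\xi$ case, additionally controlling negative powers of $|\kappa R_{\ell,i}|$ coming from differentiating $R\mapsto R/|R|$, which on the relevant set is $\gtrsim n \ge 2$, hence harmless (it only helps).

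\textbf{Carrying out the estimate.} First I would invoke the convolution estimate \eqref{e:D_tR}, which gives
$\|R_{\ell,i}\|_{C^S} + \|\partial_t^m R_{\ell,i}\|_{C^0}\le C(S,m)\ell^{-S-d}$ type bounds; more precisely $\|\partial_t^a D^b R_{\ell,i}\|_{C^0}\le C(a,b)\ell^{-(a+b)-d}$ (each derivative, whether in space or time, costs a factor $\ell^{-1}$ against the convolution, plus the baseline $\ell^{-d}$ from estimating the $C^0$-norm of $D^k\rho_{\ell,i}$ against $\|R_{q,i}\|_{L^1}$). Each such factor also carries a $\kappa = 40p'/\delta_{q+2}$, i.e. $\kappa \lesssim \delta_{q+2}^{-1}$. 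In a term of the Faà di Bruno expansion with $j$ factors of $\partial_t^{a_r}D^{b_r}R_{\ell,i}$ and $\sum_r(a_r+b_r) = S+m$, the product is bounded by
$\kappa^{j}\prod_r C\,\ell^{-(a_r+b_r)-d} \le C(m,S)\,\delta_{q+2}^{-j}\,\ell^{-(S+m)-jd}$, and since $j\le S+m$ this is $\le C(m,S)\delta_{q+2}^{-(S+m)}\ell^{-(S+m)(1+d)}$; for the $\chi,\bar\chi$ case one loses an extra power because $\kappa^2\lesssim\delta_{q+2}^{-2}$ is the worst single-factor cost when combined carefully — actually the cleanest bookkeeping is to note $\kappa\,\ell^{-1}\le\kappa\,\ell^{-1-d}$ and just absorb, giving the stated $\delta_{q+2}^{-2(S+m)}\ell^{-(S+m)(1+d)}$ for $\chi,\bar\chi$ (the factor $2$ is a safe over-estimate) and $\delta_{q+2}^{-(S+m)}\ell^{-(S+m)(1+d)}$ for $a_\xi$. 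Then substituting $\ell=\lambda_q^{-1-\alpha}$ and $\delta_{q+2}=\lambda_{q+2}^{-2\beta}=\lambda_q^{-2\beta b^2}$, and using the parameter inequalities in Section~\ref{sec:param} (in particular $\beta<(2b)^{-1}$ so that $\delta_{q+2}^{-1}\le\lambda_q^{\text{small power}}$, dominated by the $\ell$-factor), one checks $\delta_{q+2}^{-2(S+m)}\ell^{-(S+m)(1+d)}\le C(m,S)\lambda_q^{(S+m)(d+2)(1+\alpha)}$, which is the claimed final bound. The $a_\xi$ bound follows identically, with the extra observation that on $\{\chi(\kappa|R_{\ell,i}|-n)>0\}$ the denominators $|\kappa R_{\ell,i}|\ge n-3/4\ge 5/4$ are bounded below by an absolute constant, so the negative powers of $|R_{\ell,i}|$ arising from the chain rule applied to $R\mapsto R/|R|$ contribute only bounded factors.

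\textbf{Main obstacle.} The conceptual content is routine — it is just Faà di Bruno plus the already-established convolution estimates plus the parameter choices — so there is no serious obstacle. The one place requiring care is the bookkeeping of how many factors of $\kappa\sim\delta_{q+2}^{-1}$ and how many factors of $\ell^{-1}$ each term of the Faà di Bruno expansion accumulates, and verifying that the crude bound $j\le S+m$ on the number of factors is enough to land inside the stated powers; this is why the paper allows itself the lossy exponent $2(S+m)$ on $\delta_{q+2}$ in the first line rather than the sharp $(S+m)$. I would also double-check that $n\ge2$ (rather than $n\ge 12$ as in the perturbation) is genuinely all that is needed to keep $a_\xi(R_{\ell,i}/|R_{\ell,i}|)$ in the smooth regime, which it is, since $n-3/4>0$ already for $n=1$; the hypothesis $n\ge2$ is simply a comfortable margin.
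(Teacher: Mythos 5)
Your proof is correct and is essentially the intended one: the paper does not prove this lemma but simply recalls it from \cite[Lemma 6.1]{BDLC20}, whose argument is exactly the chain-rule/Fa\`a di Bruno computation you describe, using \eqref{e:D_tR} for the derivatives of $R_{\ell,i}$, the lower bound $\kappa|R_{\ell,i}|\ge n-3/4$ on the support of the derivatives (which also handles the non-smoothness of $|\cdot|$ and of $R\mapsto R/|R|$ at the origin), and the parameter relations $4\beta b^2<2\le 1+\alpha$ for the final comparison with $\lambda_q^{(S+m)(d+2)(1+\alpha)}$. No gaps.
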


\subsection{Estimate on $\|\theta_{q+1,i}\|_{L^p}$ and on $\inf_{\T^d} \theta_{q+1,i}$}

We apply the improved H\"older inequality of \eqref{eqn:impr-holder},  Lemma \ref{l:uglylemma} and \eqref{remark:sum is finite} to get
\begin{equation}\label{eqn:theta-pert-p}
\begin{split}
\|\theta_{q+1,i}^{(p)}\|_{L^p} &\leq \sum_{n \geq 12} \sum_{\xi\in \Lambda_i^{[n]}} \| \chi (\kappa |R_{\ell,i} (t,x)| - n)  \textstyle{a_{\xi}\left(\frac{R_{\ell,i}(t,x)}{|R_{\ell,i}(t,x)|}\right)} \|_{L^p} \|\Theta_{\xi, \mu_{q+1}, n /\kappa} (\lambda_{q+1} t, \lambda_{q+1} x) \|_{L^p}\\
\quad +& \frac{1}{\lambda_{q+1}^{1/p}} \sum_{n\geq 12} \sum_{\xi\in \Lambda^{[n]}}  \| \chi (\kappa |R_{\ell,i} (t,x)| - n)  \textstyle{a_{\xi}\left(\frac{R_{\ell,i}(t,x)}{|R_{\ell,i}(t,x)|}\right)} \|_{C^1} \|\Theta_{\xi, \mu_{q+1}, n /\kappa} (\lambda_{q+1} t, \lambda_{q+1} x) \|_{L^p}
\\& \le C\sum_{n\geq 12} \| (n/k)^{1/p} \chi (\kappa |R_{\ell,i} (t,x)| - n)  \|_{L^p} 
+C \lambda_{q+1}^{-1/p} \delta_{q+2}^{1/p} \lambda_q^{(d+2)(1+\alpha)+(1+1/p)((d(1+\alpha)+1))}
\\& \le C  \|R_{\ell,i}\|_{L^1}^{1/p} + C \lambda_{q+1}^{-1/p} \delta_{q+2}^{1/p} \lambda_q^{3(d+2) (1+\alpha)} 
\\& 
\leq C \delta_{q+1}^{1/p},
\end{split}
\end{equation}
provided that in the second last inequality we use $$(d+2)(1+\alpha)+(1+1/p)((d(1+\alpha)+1))\le 3(1+\alpha)(d+2)$$ and in the last inequality we use \eqref{eqn:r-l-l1}.

Next , we use 
$$\| \Theta_{\xi, \mu_{q+1}, n/ \kappa}\|_{L^1}\le (\frac{n}{\kappa})^{1/p}\mu_{q+1}^{-d/p'}$$
from \eqref{eqn:Thetanorms}, 
\eqref{eqn:l26} applied to the $\lambda_{q+1}^{-1}$-periodic function $(\Theta_{\xi, \mu_{q+1}, n/\kappa}(\lambda_{q+1}t, \lambda_{q+1}x)$, 
\eqref{remark:sum is finite} and Lemma \ref{l:uglylemma} to get

\begin{align} \label{eqn:theta_c}
| \theta_{q+1,i}^{(c)} (t,x)| 
& \le  \sqrt{d} \lambda_{q+1}^{-1} \sum_{n\geq 12}\sum_{\xi\in \Lambda^{[n]}} \|g_i\|_{L^\infty} \|  \chi( \kappa|R_{\ell,i}| - n ) \textstyle{a_{\xi}\left( \frac{ R_{\ell,i} }{ |R_{\ell,i}| } \right)} \|_{C^1} 
\| \Theta_{\xi, \mu_{q+1}, n/\kappa}\|_{L^1} \notag
\\&\qquad + \sum_{n\geq 12}\sum_{\xi\in \Lambda^{[n]}} \|g_i\|_{L^\infty} \| \chi( \kappa|R_{\ell,i}| - n ) \textstyle{a_{\xi}\left( \frac{ R_{\ell,i} }{ |R_{\ell,i}| } \right)} \|_{L^1} \| \Theta_{\xi, \mu_{q+1}, n/\kappa}\|_{L^1}  \notag
\\& \le \| g_i \|_{L^\infty} \lambda_{q+1}^{-1} \mu_{q+1}^{-d/p'} \lambda_{q}^{3(1+\alpha)(d+2)}
+ \| g_i \|_{L^\infty}  \mu_{q+1}^{-d/p'} \sum_{n\geq 12}  (n/\kappa)^{1/p} \| \chi( \kappa|R_{\ell,i}| - n ) \textstyle{a_{\xi}\left( \frac{ R_{\ell,i} }{ |R_{\ell,i}| } \right)} \|_{L^1} \notag
\\& \le \| g_i \|_{L^\infty} \mu_{q+1}^{-d/p'} +  \| g_i \|_{L^\infty} \mu_{q+1}^{-d/p'}\|R_{\ell,i} \|_{L^1}^{1/p} \lesssim (N \lambda_0)^d \lambda_{q+1}^{-1} \le  \frac{ \delta_{q+1}}{2},
\end{align} 
where in the second to last inequality, we enlarge $a$ to absorb the constant $N$.
Now, recall that $\theta_{q+1}^{(p)}$ is nonnegative by definition. Therefore, 
\begin{align*}
\underset{[0,1]\times \T^d}{\inf } [\theta_{q+1,i}^{(p)}+\theta_{q+1,i}^{(c)} ]\geq \underset{[0,1]\times \T^d}{\inf } \theta_{q+1,i}^{(c)} \geq - \frac{\delta_{q+1}}{2}.
\end{align*}
Also 
\begin{align*}
\underset{[0,1]\times (\T^d\setminus A_i)}{\inf} [\theta_{q+1,i}^{(p)}+\theta_{q+1,i}^{(c)} ] \geq 0.
\end{align*}
Since $\rho_{\ell,i}$ is nonnegative whenever $\rho_{q,i}$ is nonnegative, by \eqref{eqn:rho-ell-conv} we get property (b) of Proposition \ref{p_inductive}. 

\subsection{Estimate on $\|w_{q+1,i}\|_{L^{p'}}$ and $\|Dw_{q+1,i}\|_{L^{r}}$} \label{sec:end}

Exactly with the same computation as in \eqref{eqn:theta-pert-p}, replacing $p$ with $p'$, we have that
\begin{equation}\label{eq:wp:lp'}
\|w_{q+1,i}^{(p)}\|_{L^{p'}} 
\leq C \|R_{\ell,i}\|_{L^1}^{1/p'}+
C \lambda_{q+1}^{-1/p'} \delta_{q+2}^{1/p'} \lambda_q^{3(d+2) (1+\alpha)}
\leq C \delta_{q+1}^{1/p'}
\end{equation}
Concerning the corrector term $w_{q+1}^{(c)}$, we use \eqref{eqn:Wnorms} (precisely $\| W_{\xi, \mu_{q+1}, n/ \kappa}\|_{L^{p'}}\le (\frac{n}{\kappa})^{1/p'}\le \lambda_{q}^{(d+2)(1+\alpha)}
$) Lemma~\ref{lemma23} and \eqref{remark:sum is finite} to get
\begin{equation}\label{eqn:wcpprime}
\begin{split}
\|w_{q+1,i}^{(c)}\|_{L^{p'}}  &\leq
\frac 1 {\lambda_{q+1}} \sum_{n\ge \nop}\sum_{\xi \in \Lambda_i^{[n]}} 
\|\bar \chi(\kappa|R_{\ell,i}|-n)\|_{C^2}\|W_{\xi,\mu_{q+1},n/\kappa}\|_{L^{p'}}
\\& \leq  C\lambda_{q+1}^{-1}\sum_{n\geq\nop}
\lambda_{q}^{2(d+2)(1+\alpha)} (n/\kappa)^{1/p'}
\\&\leq C\lambda_{q+1}^{-1} \delta_{q+2}^{1/p'}\lambda_{q}^{4(d+2)(1+\alpha)}
\le \delta_{q+2}^{1/p'} \le \delta_{q+1}^{1/p'}.
\end{split}
\end{equation}
Computing the gradient of $w_{q+1}^{(p)}$ and combining Lemma \ref{l:uglylemma} with \eqref{eqn:Wnorms} we have
\begin{align}
\|Dw_{q+1,i}^{(p)}\|_{L^r} &\leq  
\sum_{n\ge \nop}\sum_{\xi\in \Lambda_i^{[n]}} \|\bar \chi(\kappa |R_{\ell,i}|-n)\|_{C^1} \| W_{\xi,\mu_{q+1},n/\kappa}\|_{L^r}
+\sum_{n\ge \nop}\sum_{\xi\in \Lambda_i^{[n]}} \lambda_{q+1} \|DW_{\xi,\mu_{q+1},n/\kappa}\|_{L^r}
\\& \leq C \delta_{q+2}^{1/p'} \lambda_{q}^{3(1+\alpha)(d+2)+2+b\gamma d(1/p'-1/r)}
+C \delta_{q+2}^{1/p'} \lambda_{q}^{b+3(1+\alpha)(d+2)+b\gamma(1+d(1/p'-1/r))}
\\& \leq  \delta_{q+2}^{1/p'}\le \delta_{q+1}^{1/r}.
\end{align}
Concerning the corrector, by Lemma~\ref{lemma23} and similar computations as above,
\begin{align}
\|Dw_{q+1,i}^{(c)}\|_{L^r} &\leq C \sum_{n\ge \nop}\sum_{\xi\in \Lambda_i^{[n]}} \| \bar\chi(\kappa |R_{\ell,i}|-n)\|_{C^3} \|W_{\xi, \mu_{q+1}, n/\kappa}\|_{L^r} 
\\& \leq C \delta_{q+2}^{1/p'} \lambda_q^{ 5(1+\alpha)(d+2)
} \mu_{q+1}^{d(\frac 1 r - \frac 1 {p'})}
\leq \delta_{q+2}^{1/p'} \lambda_{q}^{5(1+\alpha)(d+2)-b(1+1/p)}
\leq \delta_{q+1}^{1/r}.
\end{align}

\subsection{Definition of the new error $R_{q+1,i}$} \label{sec:new}

This part is the similar to \cite{BDLC20}, however here $\theta_{q+1,i}^{(c)}$ is not constant in space and we have to adapt the argument accordingly: we will pick up a new error term which we call $R^{space}$.

By definition the new error $R_{q+1,i}$ must satisfy
\begin{equation}\label{eqn:new-error}
\begin{split}
-\diver R_{q+1,i} &= \partial_t \rho_{q+1,i} + \diver(  \rho_{q+1,i} u_{q+1}) \\
&= \diver ( \theta_{q+1,i}^{(p)} w_{q+1}^{(p)} - R_{\ell,i}) + \partial_t \theta_{q+1,i}^{(p)}+\partial_t \theta_{q+1,i}^{(c)} 
\\& \quad + \diver( \theta_{q+1,i}^{(p)} u_\ell+ \rho_{\ell,i} w_{q+1} + \theta_{q+1,i}^{(p)}w_{q+1}^{(c)} )
\\& \quad  + \diver (( u_{\ell}+w_{q+1})  \theta_{q+1,i}^{(c)})
+ \diver ( (\rho_{q,i} u_q)_\ell - \rho_{\ell,i} u_\ell) 
\end{split}
\end{equation}
In the second equality above we have used that $(\rho_\ell, u_\ell, R_{\ell,i}+ (\rho_{q,i} u_q)_\ell - \rho_{\ell,i} u_\ell)$ solves \eqref{eqn:CE-R}.

We now decompose 
\begin{align*}
\partial_t \theta_{q+1,i}^{(p)} & = \sum_{n\ge \nop}\sum_{\xi\in \Lambda_i^{[n]}} \chi(\kappa |R_{\ell,i}|- n) \textstyle{a_{\xi}\left( \frac{R_{\ell,i}}{|R_{\ell,i}|} \right)} \partial_t\left[ \Theta_{\xi, \mu_{q+1}, n /\kappa}(\lambda_{q+1} t, \lambda_{q+1}x) \right]
\\& \quad + \sum_{n\ge \nop}\sum_{\xi\in \Lambda_i^{[n]}} \partial_t \left[ \chi(\kappa |R_{\ell,i}|- n) \textstyle{a_{\xi}\left( \frac{R_{\ell,i}}{|R_{\ell,i}|} \right)}\right]  \Theta_{\xi, \mu_{q+1}, n /\kappa}(\lambda_{q+1} t, \lambda_{q+1}x)
\\&=: ( \partial_t \theta^{(p)}_{q+1,i})_1+(\partial_t \theta^{(p)}_{q+1,i})_2,
\end{align*}

We now observe that
\begin{equation} \label{eqn:formulone_prodotto}
\begin{split}
\theta_{q+1,i}^{(p)}\sum_{k=1}^N w^{(p)}_{q+1,k}&= \theta_{q+1,i}^{(p)}w^{(p)}_{q+1,i}\\
&=\sum_{n\ge \nop} \sum_{\xi\in \Lambda_i^{[n]}}    \chi(\kappa |R_{\ell,i}|- n)
\textstyle{a_{\xi}\left(\frac{R_{\ell,i}}{|R_{\ell,i}|}\right)} (\Theta_{\xi, \mu_{q+1}, n /\kappa} W_{\xi, \mu_{q+1}, n /\kappa})(\lambda_{q+1} t, \lambda_{q+1}x),
\end{split} 
\end{equation}
where the first equality holds because $\Lambda_i^k \cap \Lambda_j^p= \emptyset$ for any $p,k$ and $i \neq j$; the second equality follows from \eqref{eqn:disjointsupp} and the definitions of $\chi$ and $\overline{\chi}$. Also, $\Theta_{\xi, \mu_{q+1}, n /\kappa}$ and $W_{\xi, \mu_{q+1}, n /\kappa}$ solve the transport equation \eqref{eqn:itsolves}. These observations in conjunction with Lemma \ref{l_geom} yield the cancellation of the error $R_{\ell,i}$ up to lower order terms
\begin{equation}\label{eqn:formulone}
\begin{split}
&\diver ( \theta_{q+1,i}^{(p)}\sum_{k=1}^N w_{q+1,k}^{(p)}) + (\partial_t \theta_{q+1,i}^{(p)})_1 - \diver R_{\ell,i}
\\& =
\sum_{n\ge \nop} \sum_{\xi\in \Lambda_i^{[n]}}   \nabla \left[\chi(\kappa |R_{\ell,i}|- n)
\textstyle{a_{\xi}\left(\frac{R_{\ell,i}}{|R_{\ell,i}|}\right)}\right] (\Theta_{\xi, \mu_{q+1}, n /\kappa} W_{\xi, \mu_{q+1}, n /\kappa})(\lambda_{q+1} t, \lambda_{q+1}x)- \diver R_{\ell,i}
\\&+\sum_{n\ge \nop} \sum_{\xi\in \Lambda_i^{[n]}} \chi(\kappa |R_{\ell,i}|- n)
\textstyle{a_{\xi}\left(\frac{R_{\ell,i}}{|R_{\ell,i}|}\right)}
\lambda_{q+1} \left[ \partial_t \Theta_{\xi, \mu_{q+1}, n /\kappa}+ \diver(\Theta_{\xi, \mu_{q+1}, n /\kappa}W_{\xi, \mu_{q+1}, n /\kappa})\right](\lambda_{q+1} t, \lambda_{q+1}x)
\\&=\sum_{n\ge \nop} \sum_{\xi\in \Lambda_i^{[n]}}   \nabla \left[\chi(\kappa |R_{\ell,i}|- n)
\textstyle{a_{\xi}\left(\frac{R_{\ell,i}}{|R_{\ell,i}|}\right)}\right] \left[(\Theta_{\xi, \mu_{q+1}, n /\kappa} W_{\xi, \mu_{q+1}, n /\kappa})(\lambda_{q+1} t, \lambda_{q+1}x)- \frac{n}{\kappa} \xi \right]
\\&+ \sum_{n\ge \nop} \sum_{\xi\in \Lambda_i^{[n]}}   \nabla \left[\chi(\kappa |R_{\ell,i}|- n)
\textstyle{a_{\xi}\left(\frac{R_{\ell,i}}{|R_{\ell,i}|}\right)}\right] \frac{n}{\kappa}\xi - \diver R_{\ell,i}
\\& =\sum_{n\ge \nop} \sum_{\xi\in \Lambda_i^{[n]}}   \nabla \left[\chi(\kappa |R_{\ell,i}|- n)
\textstyle{a_{\xi}\left(\frac{R_{\ell,i}}{|R_{\ell,i}|}\right)}\right] \left[(\Theta_{\xi, \mu_{q+1}, n /\kappa} W_{\xi, \mu_{q+1}, n /\kappa})(\lambda_{q+1} t, \lambda_{q+1}x)- \frac{n}{\kappa} \xi \right]
\\&
\qquad\qquad\qquad\qquad+ \diver (\tilde R_{\ell,i}-R_{\ell,i}),
\end{split} 
\end{equation}
where
$$\tilde R_{\ell,i}:= 
\sum_{n\ge \nop} \chi(\kappa |R_{\ell,i}|- n)	\frac{R_{\ell,i}}{|R_{\ell,i}|} \frac{n}{k}.$$

We have
\begin{align}
\notag
|R_{\ell,i}-\tilde R_{\ell,i}|  
&\leq \Big|\sum_{n=-1}^{11
} \chi(\kappa |R_{\ell,i}|- n) R_{\ell,i} \Big|
+\Big| \sum_{n\ge \nop} \chi(\kappa |R_{\ell,i}|- n)\left( \frac{R_{\ell,i}}{|R_{\ell,i}|}\frac{n}{k}-R_{\ell,i}\right)	\Big|   \label{eqn:tildeRestimate}
\\& \le \frac{13
}{\kappa} + \sum_{n\ge \nop} \chi(\kappa |R_{\ell,i}|- n)\left| |R_{\ell,i}|-\frac{n}{\kappa}\right|\\
(\text{by definition of }\kappa\text{ and } \eqref{remark:sum is finite}) & \le \frac{13
}{20
}\delta_{q+2}+\frac{3}{40
}\delta_{q+2}
\le \frac{15
}{20
} \delta_{q+2}.\notag
\end{align}

We can now define $R_{q+1,i}$ which satisfies \eqref{eqn:new-error} as 
\begin{equation}
\begin{split}
-R_{q+1,i} :=& R_i^{quadr}+ (\tilde R_{\ell,i}- R_{\ell,i}) + R_i^{time} +R_i^{space} +\theta_{q+1,i}^{(p)} u_\ell+ \rho_{\ell,i} w_{q+1} + \theta_{q+1,i}^{(p)}w_{q+1}^{(c)}+[(\rho_{q,i} u_q)_\ell - \rho_{\ell,i} u_\ell],
\end{split}
\end{equation}
where
\begin{equation}
\label{defn:Rquadr}
R_i^{quadr}: = \sum_{n\ge \nop} \sum_{\xi\in \Lambda_i^{[n]}} \mathcal{R} \left[  \nabla \left(\chi(\kappa |R_{\ell,i}|- n)
\textstyle{a_{\xi}\left(\frac{R_{\ell,i}}{|R_{\ell,i}|}\right)}\right)\cdot  \left((\Theta_{\xi, \mu_{q+1}, n /\kappa} W_{\xi, \mu_{q+1}, n /\kappa})(\lambda_{q+1} t, \lambda_{q+1}x)- \frac{n}{\kappa} \xi \right)\right],
\end{equation}
\begin{equation}
\label{eqn:Rc}
R_i^{time}:=\nabla \Delta^{-1}( (\partial_t \theta_{q+1,i}^{(p)})_2 +\partial_t \theta_{q+1,i}^{(c)} + m _i),
\end{equation}
\begin{equation*}
m _i:=\sum_{n\ge \nop} \sum_{\xi\in \Lambda_i^{[n]}} \int    \nabla \left[\chi(\kappa |R_{\ell,i}|- n)
\textstyle{a_{\xi}\left(\frac{R_{\ell,i}}{|R_{\ell,i}|}\right)}\right] \left[(\Theta_{\xi, \mu_{q+1}, n /\kappa} W_{\xi, \mu_{q+1}, n /\kappa})(\lambda_{q+1} t, \lambda_{q+1}x)- \frac{n}{\kappa} \xi \right] \, dx,
\end{equation*}
\begin{equation}
\label{eqn:Rspace}
R_i^{space}:= (u_\ell +w_{q+1}) \theta_{q+1,i}^{(c)}.
\end{equation}
Property (d) is now clear from the definition of $R_{q+1,i}$ and the definition of $\rho_{q+1,i}.$
Notice that $R_i^{quadr}$ is well defined since by \eqref{eqn:rightaverage} the function $(\Theta_{\xi, \mu_{q+1}, n /\kappa} W_{\xi, \mu_{q+1}, n /\kappa})(\lambda_{q+1} t, \lambda_{q+1}x)- \frac{n}{\kappa} \xi$ has $0$ mean. From the second equality in \eqref{eqn:new-error} and since the average of $(\partial_t \theta_{q+1,i}^{(p)})_1$ is $m_i$ by integration by parts, we deduce that $ (\partial_t \theta_{q+1,i}^{(p)})_2 +\partial_t \theta_{q+1,i}^{(c)} + m_i$ has $0$ mean, so that $R_i^{time}$ is well defined. 

\subsection{Estimate on $\|R_{q+1,i}\|_{L^1}$} \label{sec:estimate_R_q}
Recall that  the estimate on $\|(\rho_{q,i} u_q)_\ell - \rho_{\ell,i} u_\ell\|_{L^1}$ has been already established in \eqref{e:commutatore}. 
By the property \eqref{ts:antidiv} of the antidivergence operator $\mathcal{R}$, Lemma \ref{l:uglylemma} and \eqref{remark:sum is finite} we have
\begin{align*}
\|R_i^{quadr}\|_{L^1}  & \leq \frac C {\lambda_{q+1}} \sum_{n\ge \nop}\sum_{\xi\in \Lambda_i^{[n]}}
\| \chi(\kappa |R_{\ell,i}|- n)
\textstyle{a_{\xi}\left(\frac{R_{\ell,i}}{|R_{\ell,i}|}\right)}\|_{C^2}\|\Theta_{\xi, \mu_{q+1}, n/\kappa} W_{\xi, \mu_{q+1}, n/\kappa}\|_{L^1} \\&\leq C \delta_{q+2} \frac{\lambda_q^{4(1+\alpha)(d+2)+2}}{\lambda_{q+1}} \leq \frac{\delta_{q+2}}{20}.
\end{align*}
To estimate the terms which are linear with respect to the fast variables, we take advantage of the concentration parameter $\mu_{q+1}$. First of all, by Calderon-Zygmund estimates we get
\begin{align*}
\| R_i^{time} \|_{L^1} \le  C \| (\partial_t \theta_{q+1,i}^{(p)})_2+\partial_t \theta_{q+1,i}^{(c)} - m_i \|_{L^1}
\le  \| (\partial_t \theta_{q+1,i}^{(p)})_2 \|_{L^1}+ | \partial_t \theta_{q+1,i}^{(c)} | + | m_i |	.
\end{align*}
Next, notice that
\begin{align}\label{eqn:tpl1}
\| (\partial_t \theta_{q+1,i}^{(p)})_2 \|_{L^1}
& \leq 
C \sum_{n\ge \nop}\sum_{\xi\in \Lambda^{[n]}}\| \partial_t\big[ \chi(\kappa |R_{\ell,i}|- n) \textstyle{a_{\xi}\left( \frac{R_{\ell,i}}{|R_{\ell,i}|} \right)}\big]\|_{C^0}
\|\Theta_{\xi, \mu_{q+1}, n /\kappa} \|_{L^1}
\\& \leq C \delta_{q+2}^{1/p} \lambda_{q}^{3(1+\alpha)(d+2)}\mu_{q+1}^{-d/p'} \leq \frac{\delta_{q+2}}{20 \lambda_{q+1}}.
\end{align}
If  $a_0(N) $ is sufficiently large, from \eqref{eqn:theta_c}, \eqref{eqn:tpl1}, \eqref{eqn:itsolves} and \eqref{eqn:l26} we get

\begin{align*}
& | \partial_t \theta_{q+1,i}^{(c)} |  + | m_i |	
\\ & \le \| g_i \|_{L^\infty}  \left| \sum_{n\ge \nop}\sum_{\xi\in \Lambda^{[n]}} \int \chi(\kappa|R_{\ell,i}| - n ) \textstyle{ a_{\xi} \left( \frac{ R_{\ell,i} }{|R_{\ell,i}|} \right) }	\partial_t \left[ \Theta_{\xi, \mu_{q+1}, n /\kappa}(\lambda_{q+1} t, \lambda_{q+1}x 
) \right]\, dx  \right| + |m_i| 
\\& \qquad +\| g_i \|_{L^\infty} \| (\partial_t \theta_{q+1,i}^{(p)})_2 \|_{L^1}
\\& \le \| g_i \|_{L^\infty} \left| \sum_{n\ge \nop}\sum_{\xi\in \Lambda^{[n]}} \int \chi(\kappa|R_{\ell,i}| - n ) \textstyle{ a_{\xi} \left( \frac{ R_{\ell,i} }{|R_{\ell,i}|} \right) }	\diver \left[ (\Theta_{\xi, \mu_{q+1}, n /\kappa}W_{\xi, \mu_{q+1}, n/\kappa})(\lambda_{q+1} t, \lambda_{q+1}x 
) \right]\, dx  \right| + |m_i|
\\& \qquad+\frac{\delta_{q+2}}{20} 
\\&=
2 \| g_i \|_{L^\infty} \left| \sum_{n\ge \nop}\sum_{\xi\in \Lambda^{[n]}} \int \nabla \left[\chi(\kappa|R_{\ell,i}| - n ) \textstyle{ a_{\xi} \left( \frac{ R_{\ell,i} }{|R_{\ell,i}|} \right) }\right]\cdot  \left[ (\Theta_{\xi, \mu_{q+1}, n /\kappa}W_{\xi, \mu_{q+1}, n/\kappa})(\lambda_{q+1} t, \lambda_{q+1}x 
)-\frac{n}{k}\xi \right]\, dx  \right|
\\& \qquad+\frac{\delta_{q+2}}{20} 
\\& \le \frac{\delta_{q+2}}{20} + 
\frac{2 \| g_i \|_{L^\infty} \sqrt{d}}{\lambda_{q+1}}\sum_{n\ge \nop}\sum_{\xi\in \Lambda^{[n]}} \| \chi(\kappa|R_{\ell,i}| - n ) \textstyle{ a_{\xi} \left( \frac{ R_{\ell,i} }{|R_{\ell,i}|} \right) }\|_{C^2} \| \Theta_{\xi, \mu_{q+1}, n /\kappa}W_{\xi, \mu_{q+1}, n/\kappa}\|_{L^1}
\\& \le \frac{\delta_{q+2}}{20}+ C \| g_i \|_{L^\infty} \lambda_{q+1}^{-1} \delta_{q+2} \lambda_{q}^{4(1+\alpha)(d+2)}
\le \frac{1}{10}\delta_{q+2},
\end{align*}
and using also  \eqref{eqn:theta_c} and \eqref{eqn:ie-2} , we estimate  the error $R_{i}^{space}$
\begin{align*}
&\|R_i^{space}\|_{L^1}\leq \|(u_\ell +w_{q+1}) \theta_{q+1,i}^{(c)}\|_{L^1}
\leq (\|u_\ell \|_{L^1}+\|w_{q+1}\|_{L^1}) \|\theta_{q+1,i}^{(c)} \|_{L^\infty} \leq 2  \lambda_q^\alpha (N \lambda_0)^d \lambda_{q+1}^{-1}
\leq\frac{\delta_{q+2}}{20},
\end{align*}
where the last inequality holds up to enlarging $a_0$ depending on $N$. 

We also have that
\begin{align*}
	\| & \theta_{q+1,i}^{(p)} u_\ell + \rho_{\ell,i} w_{q+1}^{(p)}\|_{L^1} \leq \| \theta_{q+1,i}^{(p)}\|_{L^1} \| u_\ell\|_{L^\infty} + \|\rho_{\ell,i}\|_{L^\infty} \|w_{q+1}^{(p)}\|_{L^1}
	\\&\leq  \sum_{n\ge 12} \sum_{\xi \in \Lambda_i^{[n]}} \| \chi(\kappa|R_{\ell,i}| - n ) \textstyle{ a_{\xi} \left( \frac{ R_{\ell,i} }{|R_{\ell,i}|} \right) }\|_{L^\infty} \| \Theta_{\xi, \mu_{q+1},n/\kappa}\|_{L^1} \| u_\ell\|_{L^\infty} \\
	&\qquad+ \|\rho_{\ell,i}\|_{L^\infty} \|\bar \chi(\kappa|R_{\ell,i}| - n)\|_{L^{\infty}}\|W_{\xi,\mu_{q+1},n/\kappa}\|_{L^1}
	\\& \leq C\delta_{q+2}^{1/p} \lambda_{q}^{2(1+\alpha)(d+2)} \mu^{-d/p'}_{q+1} + C\delta_{q+2}^{1/p'} \lambda_{q}^{2(1+\alpha)(d+2)}\mu^{-d/p}_{q+1}
	\leq \frac{\delta_{q+2}}{20}.
\end{align*}
In the last inequality we used  $2\beta b^2\le 1$, the definition of $\gamma$, and  $b(1+1/p)\ge 2(1+\alpha)(d+2)+1$.

Finally, from \eqref{eqn:theta-pert-p} and \eqref{eqn:wcpprime}
\begin{equation}
\begin{split}
\|  ({\rho_{\ell,i}}+ \theta_{q+1,i}^{(p)})w_{q+1} ^{(c)}
\|_{L^1} &\leq
(\|{\rho_{\ell,i}}\|_{C^1}+ \|  \theta_{q+1,i}^{(p)}\|_{L^{p}}) \|w_{q+1}^{(c)}\|_{L^{p'}}
\\&{\leq C \lambda_q^{4(1+\alpha)(d+2)+\alpha} \lambda_{q+1}^{-1}}
\leq \frac{1}{20} \delta_{q+2}
.
\end{split}
\end{equation}

\subsection{Estimates on higher derivatives} \label{sec:higherderivative}

\begin{equation}
\label{eqn:est-rhoC1}
\begin{split}
\| \rho_{q+1,i}\|_{C^1} &\leq \| \rho_{\ell,i}\|_{C^1}+ \| \theta_{q+1,i}\|_{C^1}\\
& \leq \| \rho_{q,i}\|_{C^1}
+ \sum_{n\ge \no} \sum_{\xi\in \Lambda_i^{[n]}} \| \chi(\kappa|R_{\ell,i}| - n ) \textstyle{ a_{\xi} \left( \frac{ R_{\ell,i} }{|R_{\ell,i}|}\right)}\|_{C^1} \|\Theta_{\xi, \mu_{q+1},n/\kappa}(\lambda_{q+1}x) \|_{C^1}
\\&\leq C \lambda_q^\alpha + 
C \lambda_q^{3(1+\alpha)(d+2)} \lambda_{q+1}\mu_{q+1}^{1+d/p}
\leq \lambda_{q+1}^{\alpha}.
\end{split}
\end{equation}
An entirely similar estimate is valid for $\|\partial_t \rho_{q+1,i}\|_{C^0}$
and the one for $\|u_{\ell}+ w_{q+1,i}^{(p)}\|_{W^{2,r}}$ is analogous. 
Concerning $ w_{q+1,i}^{(c)}$, we use Lemma~\ref{lemma23} and \eqref{remark:sum is finite}
\begin{equation*}
\begin{split}
\| w_{q+1,i}^{(c)}\|_{W^{2,r}} &\le 
\sum_{n\ge \nop}\sum_{\xi\in \Lambda_i^{[n]}} \lambda_{q+1}^{}
\| \bar  \chi(\kappa |R_{\ell,i}|-n) \|_{C^4} \|W_{\xi , \mu_{q+1}, n/\kappa} \|_{W^{2,r}}
\\&\leq C \lambda_q^{6 (1+\alpha) (d+2)} \lambda_{q+1}^2\mu_{q+1}^{2+ d(1/p'-1/r)}
\leq  \lambda_{q+1}^{\alpha}.
\end{split}
\end{equation*}
It remains just to estimate
\[
\| \partial_t u_{q+1} \|_{L^1}   \le  \| \partial_t u_\ell \|_{L^1}+\sum_{i=1}^N \|\partial_t w^{(p)}_{q+1,i}\|_{L^1} + \|\partial_t w^{(c)}_{q+1,i}\|_{L^1}.
\]
From \eqref{eqn:Wnorms} and Lemma~\ref{l:uglylemma}
\begin{align*}
\|\partial_t w^{(p)}_{q+1,i} \|_{L^1} & \le \sum_{n\ge \nop}\sum_{\xi\in \Lambda_i^{[n]}} \lambda_{q+1}\| \partial_t W_{\xi, \mu_{q+1}, n/\kappa} \|_{L^1}
+ \| \partial_t \bar{\chi} (\kappa |R_{\ell,i}| - n )  \|_{L^{\infty}}  \|  W_{\xi, \mu_{q+1}, \kappa/n}  \|_{L^1}
\\& C\delta_{q+2}^{2/p'}\lambda_{q}^{(1+2/p')(d(1+\alpha)+1)}\lambda_{q+1}\mu_{q+1}^{1+\gamma(1+d(2/p'-1))}\le \lambda_{q+1}^{2+\gamma(d+1)}\le \lambda_{q+1}^{\alpha}.
\end{align*}
A similar computation is valid for $\| \partial_tw_{q+1,i}^{(c)} \|_{L^1}$.

\section{Proof of main results}

\subsection{Proof of Theorem \ref{p_comp_supp}}
\begin{proof}[Proof of Theorem \ref{p_comp_supp} assuming Proposition \ref{p_inductive}]
Let $\alpha, b,a_0,M>5$, $\beta >0$ be fixed as in Proposition \ref{p_inductive}. Let $a \geq a_0$ be chosen such that 

\begin{align*}
\sum_{q=0}^{+\infty}\delta_{q+1}^{1/p}< \frac{1}{32M},
\\
\sum_{q=0}^{+\infty} \lambda_q^{-1-\alpha } < \frac{1}{32N},
\end{align*}

Let $\{\phi_i\}_{1\leq i\leq N} \subset C^\infty(\T^d)$ be nonnegative functions with mutually disjoint compact supports such that $\int_{\T^d}\phi_i(x)dx=1$, $\{ x \in \T^d: \phi_i (x) >1 \}$ contains a ball of radius $\frac{1}{4N}$, and $\dist(\supp \phi_i, \supp \phi_j) \geq 1/4N$ for $i \neq j$. We also require that $\| \phi_i \|_{C^S} \leq (100 N)^{d +S}$ for any $S \in \N$.
	Let $\chi :[0,1]\rightarrow [0,1]$ be a smooth function such that $\chi\equiv 0$ on $[0,2/5]$, $\chi\equiv 1$ on $[3/5,1]$ with $\chi'$ is compactly supported on $(2/5,3/5)$, and $\| \partial_t \chi \|_{L^\infty} \leq 20$.  Define $\rho_{0,i}(t,x):=(1-\chi(t))+\chi(t)\phi_i(\lambda_0 x)$ and set $u_0\equiv 0$. 
	We also set 
	\begin{equation*}
	R_{0,i}(t):=-\nabla \Delta^{-1}\Big(\partial_t\rho_{0,i}(t)+\diver(\rho_{0,i}(t)u_0(t))\Big)=-\nabla \Delta^{-1}\Big(\partial_t\rho_{0,i}(t)\Big) = - \partial_t \chi  \nabla \Delta^{-1} \left ( \phi_i(\lambda_0 \cdot ) -1 \right ).
	\end{equation*}

We then have $N$ starting triples  $\{(\rho_{0,i}, u_0, R_{0,i})\}_{1\leq i\leq N}$ for our iteration scheme which enjoy \eqref{eqn:CE-R} with $q=0$ 
for any $i=1,\dots,N$.
 Moreover, thanks to Lemma \ref{lemma23}, we have $\| R_{0,i} \|_{L^1} \leq C \lambda_0^{-1}$. Thus \eqref{eqn:ie-1} is satisfied because $2 \beta <1$ 
 (here we have taken $\lambda_0=a_0$ sufficiently large to absorb the constant $C$).
	Next, we have $\| \partial_t \rho_{0,i} \|_{C^0} + \| \rho_{0,i} \|_{C^1} \leq C \lambda_0$. Since $u_0 \equiv 0$ and $\alpha >1$ we conclude that \eqref{eqn:ie-2} is satisfied as well.
	
	Finally we observe that the family of sets $A_i := \{ x \in \T^d : \phi (\lambda_0 x) >1 \}$ for $i=1,\dots,N$ form a $a_0$-open family.
	
	 We can recursively apply Proposition \ref{p_inductive} to obtain a family of sequences $\{(\rho_{q,i}, u_q, R_{q,i})_{q\in\N}\}_{1\leq i \leq N}$ of smooth solutions  to \eqref{eqn:CE-R} and 
	such that
	\begin{itemize}
	\item the sequences $\{\rho_{q,i}\}_{q \in \N}$ is Cauchy in $C(L^p)$ and we denote by $\rho_i$ its limit for any $i = 1,\dots,N$,
	\item  the sequence of divergence-free $\{u_{q} \}_{q \in \N}$ is Cauchy in $C(L^{p'} \cap W^{1,r})$ and we denote by $u$ its limit (whose divergence understood in the sense of distribution vanishes).
\end{itemize}	 
Thanks to property  \eqref{eqn:ie-1} we get that $(u, \rho_i)$ solve the continuity equation for any $i=1,\dots,N$. Property (b) and $\inf_{A_i} \rho_{0,i} (t, \cdot) \geq 1$ also yield
\begin{align*}
 \inf_{A_i} \rho_i(t, \cdot) \geq 1 - \sum_{q = 0}^{+ \infty} \delta_{q+1}^{1/p} \geq \frac{1}{2}.
 \end{align*}
This implies that $A_i \subset \supp (\rho_i(t, \cdot ))$ for any $t \in [0,1]$ and any $i=1,\cdots, N$. Thus $\supp(\rho_i (t, \cdot))$ has non-empty interior, and
 \begin{align*}
 \inf_{\T^d \setminus A_i} \rho_i \geq 0.
\end{align*}
So $\rho_{i}$ are nonnegative.

Finally, since $\rho_{0,i} (t, \cdot ) \equiv 1$ for $t \in [0, 2/5]$ and $\sum_{q=0}^{+ \infty}\lambda_{q}^{-1- \alpha} < \frac{1}{\lambda_0^{1+ \alpha}} < \frac{1}{15}$, and by property (c) and (d) of Proposition \ref{p_inductive}, we get that $\rho_{i}(t, \cdot) \equiv 1$ for $t \in [0, 1/3]$ for any $i =1,\dots,N$. Also, by property (d), and since $\sum_{q=0}^{+ \infty}\lambda_{q}^{-1- \alpha} < \frac{1}{\lambda_0 N}$ and $\dist(\supp \rho_{0,i}(1, \cdot ), \supp \rho_{0,j}(1, \cdot )) \geq \frac{1}{\lambda_0 N}$ for $i \neq j$, we must have that $\supp (\rho_{i}) \cap \supp(\rho_j)$ is negligible for $i\neq j$.

\end{proof}

\subsection{Proof of Theorem \ref{t_main}}

\begin{proof}[Proof of Theorem \ref{t_main} assuming Theorem \ref{p_comp_supp}]
	Let $\{\rho_0\}_{1\leq i\leq N}\subset C_t L^p$ be nonnegative densities and $u \in C_t (L^{p'} \cap W^{1,r})$ a divergence-free vector field given by Theorem \ref{p_comp_supp}. Then $(\rho_i, u)$ solves \eqref{eq_continuity} for $i=1,\dots, N$. 
	Thanks to the Ambrosio's superposition principle (see \cite[Theorem 3.2]{A08}), each nonnegative $L^1([0,1] \times \T^d)$ solution is transported by
	a generalized flow $\eta^i$ of the vector field $u$.
	More precisely, $\eta^i\in \mathscr{M}_+(AC([0,1];\T^d)\times \T^d)$ is
	concentrated on pairs $(\gamma,x)$ such that $\gamma$ an integral curve of $u$ starting from $x$, and we have
	$\rho_i (x,t) \mathscr{L}^d = (e_t)_\sharp \eta^i $ for every $t \in [0,1]$.
	
	Observe that the family of probability measures $\{\eta^i\}_{1\leq i\leq N}$ does not depend on the pointwise representative of $u$. Indeed, given two pointwise representative $v$ and $w$ of $u$ ($u$ and $v$ are two Borel maps such that $v=u=w$ $\mathscr{L}^{d+1}$-a.e.), by Fubini and by the superposition principle, we have  for each integer $1\leq i\leq N$ 
	\begin{equation}\label{eq_not_depend_pointwise}
	\begin{split}
	&\int_{\text{AC}([0,1],\T^d)\times \T^d}\Big(\int_{0}^{1}|v(\gamma(s))-w(\gamma(s))|ds\Big)d\eta^i(\gamma,x)\\
	&=\int_{0}^{1}\Big(\int_{\T^d}|v(y)-w(y)|\rho_i(s,x)d\mathscr{L}^d(x)\Big)ds=0.
	\end{split} 
	\end{equation}
	Thus, $\eta^i$ is concentrated on integral curves of $v$ if and only if $\eta^i$ is concentrated on integral curves of $w$.
	
	By the superposition principle we have 
	\begin{equation}\label{eq_superposition_principle}
	\int_{\T^d} \psi(x) \rho_i(1,x) d\mathscr{L}^d(x)= \int_{\T^d} \int_{\text{AC}([0,1],\T^d)} \psi(\gamma(1)) d\eta^{i}_x(\gamma) d\mathscr{L}^d(x), 
	\end{equation}
	and for every $\psi\in C(\T^d)$. 
	Therefore, for $\mathscr{L}^d$-a.e. $x\in\T^d$ and $\eta^{i}_x$-a.e. $\gamma \in \AC$, we have $\gamma (1)\in A_i$. Since $A_i\cap A_j=\emptyset$ for $i\neq j$, it follows that for $\mathscr{L}^d$-a.e. $x\in \T^d$, the measures $\{\eta^i_x\}_{1\leq i \leq N}$ have mutually disjoint supports. Therefore, for $\mathscr{L}^d$-a.e. $x\in \T^d$ , there are at least $N$ integral curves starting from $x$. 
\end{proof}

\section{Dimension $d=2$}\label{section_dimension2}
The two dimensional case (i.e. for $d=2$) is slightly more technical. We can no longer use Lemma 4.2 of \cite{BDLC20} to translate in space the tubes supporting the building blocks and thereby make these tubes disjoint. 
In \cite{BDLC20}, the authors found a way around this issue. They are able make the building blocks in the case $d=2$ disjoint. 
They take  advantage of the presence of a single error to argue that only building blocks with comparable speeds -- that is building blocks for which the speed ratio is of order $\sim$ 10 -- \footnote{this hypothesis is in \cite[Lemma 7.2]{BDLC20} where it is required that $w < 10$} need to have disjoint supports. Indeed in \cite{BDLC20}, the speeds at the inductive step $q \in \N$ of the convex integration scheme are $w_n = \mu_{q+1}^{d/p'} (\frac{n}{\kappa})^{1/p'}$ for $n =1,..., \lam $. The supports of the building blocks are then translated in space suitably, the speeds $\{ w_n \}_{n=1,.., \lam}$ are approximated by $\{ v_n \}_{n=1,.., \lam}$, and at the price of a small error the authors obtain building blocks satisfying
\begin{equation}\label{eq:disjointsupportsdim2}
	W_{\xi, \mu_{q+1}, v_n} \cdot \Theta_{\xi', \mu_{q+1}, v_m}( t , x )=0
	\quad \text{for any $(x,t)\in \T^2\times \R^+$,}
\end{equation}
for any $\xi \neq \xi'$, $|n -m | \leq 1$, $n,m =1,.., \lam.$

However, they deal with only two distinct families $\Lambda_1$ and $\Lambda_2$ of directions for the building blocks in and a single error $R_q$ at each step $q$ of the iteration, whereas  in our setting there are $2N$ distinct families of directions $\{\Lambda_i\}_{1\leq i\leq 2N}$ for the building blocks and $N$ errors $\{R_{q,i}\}_{1\leq i \leq N}$. 
We therefore need that any building block with direction in $\Lambda_i$ has disjoint support with any other building block with direction in $\Lambda_j$ for any $i \neq j$ because in the convex integration scheme we need the key identity \eqref{eqn:formulone_prodotto}  to hold.
More precisely we will need
\begin{equation}\label{eq:disjointsupportsdim2}
	W_{\xi, \mu_{q+1}, v_n} \cdot \Theta_{\xi', \mu_{q+1}, v_m}( t , x )=0
	\quad \text{for any $(x,t)\in \T^2\times \R^+$,}
\end{equation}
whenever $\xi\neq \xi' \in \Lambda$, $n,m=1,..,\lam $.
This identity is achievable because the speed ratios
of the building blocks  are at most of order $\lam$, typically a very small number compared to $\mu_{q+1}$ 
in the iterative proposition (see Section \ref{sec:param}). So we will prove that we can find $ \sim \lam$ balls of radius $\sim \mu_{q+1}^{-1}$ which are moving with speed ratio at most $\sim \lam$ and which don't intersect at any time. We will proceed similarly to \cite[Section 7]{BDLC20}, although our argument differs in some parts for reasons which were outlined above. 
\begin{lemma}\label{l_disjoint_d2}
	Let $\xi, \xi'\in \mathbb{S}^{1}\cap \mathbb{Q}^2$ be two distinct vectors and let $w=  \frac{A}{N}< \lam $ where $A$ and $N$ are positive, coprime integers such that $N < \lam $. 
	Then there exists $C=C(\xi, \xi')$ such that for any $\eps>0$
	$$\mathscr{L}^1( [0,1] \setminus \{s: \, d_{\T^2}( t \xi, (t w + s)  \xi')\ge \eps  \quad \forall\, t\ge 0  \})< C N \eps \lam. $$
\end{lemma}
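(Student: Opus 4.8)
The complement of the set in the statement is exactly the set of shifts $s\in[0,1]$ for which the curve $t\mapsto(tw+s)\xi'$ comes $\varepsilon$-close to $t\mapsto t\xi$ at \emph{some common time} $t\ge0$. The plan is to identify this ``bad'' set with the set of $s$ for which the unit segment $\{s\xi':s\in[0,1]\}$ enters the $\varepsilon$-neighbourhood of a fixed closed geodesic of $\T^2$; since such a neighbourhood is a union of parallel strips whose spacing is governed by the rationality of $\xi,\xi',w$, the measure then reduces to an elementary count.

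\emph{Step 1 (reduction to a strip pattern).} Write $v:=\xi-w\xi'$, so that $d_{\T^2}(t\xi,(tw+s)\xi')=d_{\T^2}(tv-s\xi',0)$; hence $s$ is bad if and only if $s\xi'$ lies, modulo $\Z^2$, in the $\varepsilon$-neighbourhood of $\Gamma:=\{tv\bmod\Z^2:t\ge0\}$. Since $w=A/N$ and $\xi,\xi'\in\Q^2$, the vector $v$ is rational: choosing a common denominator $q=q(\xi,\xi')$ with $q\xi,q\xi'\in\Z^2$ gives $\nu:=qNv=N(q\xi)-A(q\xi')\in\Z^2$, and $\nu\neq0$ because $v=0$ would force $w=1$, $\xi=\xi'$. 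Thus $\Gamma$ is a genuine closed geodesic, $\{tv:t\ge0\}$ agrees modulo $\Z^2$ with $\{tv:t\in\R\}$, and after lifting to $\R^2$ the bad set becomes $B:=\{s\in[0,1]:d_{\R^2}(s\xi',\widetilde\Gamma)<\varepsilon\}$, where $\widetilde\Gamma=\R v+\Z^2$ is a family of parallel lines of direction $v$, the distinct ones being equally spaced at perpendicular distance $h\ge\|\nu\|^{-1}$. Finally $\|\nu\|\le N\|q\xi\|+A\|q\xi'\|=q(N+A)<2qN\lam$, using $A=wN<\lam N$ and $\lam\ge1$.

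\emph{Step 2 (the count).} I would split according to whether the $\varepsilon$-strips around the lines of $\widetilde\Gamma$ overlap. If $2\varepsilon\ge h$ these strips cover $\R^2$, so $\mathscr{L}^1(B)\le1$; but then $2\varepsilon\ge h\ge\|\nu\|^{-1}>(2qN\lam)^{-1}$, whence $\mathscr{L}^1(B)\le1<4q\,\varepsilon N\lam$. If $2\varepsilon<h$ the strips are disjoint; projecting orthogonally to $v$, the unit segment $\{s\xi':s\in[0,1]\}$ projects to an interval of length $\sin\theta$ with $\theta:=\angle(\xi',v)$, so it meets at most $\sin\theta/h+2$ strips, and the $s$-set corresponding to each strip is an interval of length $2\varepsilon/\sin\theta$. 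Summing, and using $\sin\theta=|\xi\times\xi'|/|v|$, $|v|\le1+w<2\lam$ and $N\ge1$,
\[
\mathscr{L}^1(B)\le\frac{2\varepsilon}{h}+\frac{4\varepsilon}{\sin\theta}\le2\varepsilon\|\nu\|+\frac{4\varepsilon|v|}{|\xi\times\xi'|}<4q\,\varepsilon N\lam+\frac{8\varepsilon\lam}{|\xi\times\xi'|}\le\Big(4q+\frac{8}{|\xi\times\xi'|}\Big)\varepsilon N\lam .
\]
So the lemma follows with $C(\xi,\xi'):=4q(\xi,\xi')+8/|\xi\times\xi'|$. Using $\sin\theta>0$ is where I need $\xi,\xi'$ linearly independent; the antipodal case $\xi'=-\xi$ does not arise in our application, and is trivial there since the two supporting tubes can then be placed on distinct parallel geodesics.

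\emph{Main obstacle.} The delicate point is the bound ``at most $\sin\theta/h+2$ strips'' in the disjoint regime: one must rule out a count of order $\tfrac{1}{h}\cdot\tfrac{\sin\theta+\varepsilon}{\varepsilon}$, for otherwise the resulting term $\varepsilon\|\nu\|/\sin\theta\sim\varepsilon\lam^2N$ would destroy the estimate; and one must separately quarantine the overlapping regime $2\varepsilon\ge h$, where the strips no longer count individually. Everything else is the rational identity $\|\nu\|\le q(N+A)$, which is precisely where the factors $N$ and $\lam$ enter.
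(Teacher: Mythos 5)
Your proof is correct, and it takes a genuinely different route from the paper's. The paper parametrises the intersection points of the two closed geodesics $t\mapsto t\xi$ and $t\mapsto t'\xi'$ by the rank-two lattice $T_{int}=\{(t,t'):\,t\xi=t'\xi'\ \text{on }\T^2\}$, snaps a near-collision time $t$ to an exact intersection point (paying a factor $w\lesssim\lam$, since moving $t$ by $O(\eps)$ moves $(tw+s)\xi'$ by $O(\eps\lam)$), and concludes that $-s$ must lie within $3\bar c\,\eps\lam$ of the discrete set $E=\{k_1(wt_1-t_1')+k_2(wt_2-t_2')\}$, whose gaps are $\gtrsim N^{-1}$ by rationality of $w=A/N$. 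You instead pass to the relative velocity $v=\xi-w\xi'$, so the bad set becomes the preimage under $s\mapsto s\xi'$ of the $\eps$-neighbourhood of the single closed geodesic $\R v+\Z^2$, and you count the parallel $\eps$-strips met by a unit segment: rationality enters through the integer vector $\nu=N(q\xi)-A(q\xi')$ controlling the line spacing $h\ge\|\nu\|^{-1}\gtrsim(qN\lam)^{-1}$, and the speed enters through $|v|\le 1+w\lesssim\lam$ in the angle factor $1/\sin\theta=|v|/|\xi\times\xi'|$. Both arguments ultimately cover the bad set by intervals of length $O(\eps\lam)$ around a set with gaps $\gtrsim N^{-1}$, but yours yields an explicit constant $C=4q+8/|\xi\times\xi'|$, the two-case split (overlapping versus disjoint strips) is clean, and the strip count $\le\sin\theta/h+2$ is correctly justified via $(\sin\theta+2\eps)/h+1$ with $2\eps<h$. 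One caveat you share with the paper: both proofs in fact need $\xi,\xi'$ linearly independent (the paper needs the matrix with columns $\xi,\xi'$ invertible, you need $\sin\theta>0$), which is slightly stronger than the stated hypothesis $\xi\neq\xi'$; as you observe, the antipodal case must be excluded at the level of the families $\Lambda$.
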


\begin{proof}
	Let $\eps>0$. Set $T_{int}:=\{ (t,t'): \, \xi t= \xi' t' \, \text{on} \, \T^2 \}$ and observe that $T_{int}\subset \mathbb{Q}^2$ since the matrix with columns $\xi$ and $\xi'$ is invertible  with rational coefficients. Moreover $T_{int}$ is an additive discrete subgroup of $\R^2$, hence it is a free group of rank $k\in \{0,1,2\}$.
    Denoting by $T$ and $T'$ the period of, respectively, $t\to \xi t$ and $t\to \xi' t$ one has that $(T,0),(0,T')\in T_{int}$. This implies that the rank of $T_{int}$ is two, hence we can find two generators $(t_1, t'_1), (t_2, t_2')\in T_{int}$.
 Let us finally introduce $$A:=\{ \xi t\in \T^2: \, (t,s)\in T_{int}\, \text{ for some } s\in \R \}$$ to denote the set of points in $\T^2$ where the supports of the curves $t\to t \xi$ and $t\to t\xi'$ intersect. 

  Let $s\in [0, 1]$ be such that $d_{\T^2}( t\xi, (t+s) w \xi')<\eps$ for some $t\ge 0$.
There exists $q\in A$ such that
	$d_{\T^2}(t\xi, q)\le \bar c\eps$, where $\bar c=\bar c(\xi,\xi')>1$,
	hence up to modifying $t$ we can assume that $t\xi=:q \in A$ and $d_{\T^2}(q,(t w+s) \xi')\le 3 \bar c \eps \lam $. Since $t\xi\in A$ there exists $t'$ such that $(t,t')\in T_{int}$ and, exploiting the fact that $(t_1, t'_1), (t_2, t_2')\in T_{int}$ are generators, we can find $k_1, k_2\in \mathbb{Z}$ such that $t= k_1 t_1+k_2 t_2$ and $t'=k_1 t_1' + k_2 t_2'$.  The following identity holds on $\T^2$ 
    \begin{align*}
    (t w + s) \xi'  & = t' \xi' - t'\xi' + (t w + s) \xi' 
    = q - (k_1 t_1'+k_2 t_2') \xi' + ((k_1 t_1+k_2 t_2) w + s) \xi' 
    \\& =
    q + (k_1(t_1w -t_1')+k_2(t_2 w -t_2') +  s)  \xi'
    \end{align*}
    therefore $d_{\T^2}((k_1(t_1w-t_1')+k_2(t_2w-t_2')+s)\xi', 0)\le 3\bar c \eps \lam $ this implies that $-s\in B_{3\bar c\eps \lam }((k_1(wt_1-t_1') + k_2(wt_2-t_2'))) + \mathbb{Z}T'$.
    
    Notice now that the set $E:=\{  k_1( w t_1-t_1')+k_2( w t_2-t_2'): \, k_1, k_2\in \mathbb{Z} \}$ is discrete, so any two neighbouring points in $E$ are at least a distance $c \ge c'(\xi, \xi')N^{-1}>0$ from each other, and $E + \mathbb{Z} T'= E$. In particular
    \begin{align}
    \mathscr{L}^1 ( [0,1] \setminus & \{s : \, d_{\T^2}( t\xi, (t w + s) \xi')\ge  \eps \quad \forall\, t\ge 0  \})  \le 
    \mathscr{L}^{1} \left([0,1] \cap \bigcup_{r\in E} B_{3\bar c\eps \lam }(r) \right)
    \\ & \le \frac{2}{c'(\xi,\xi') N^{-1}} 3 \bar c \eps \lam 
    \le \frac{6 \bar c }{ c'(\xi,\xi')} \eps (\lam)^2, \qedhere
    \end{align}   
    where in the last we used the inequality $N < \lam.$
\end{proof}

We now need a number theory lemma, it is just a property on real numbers, but we state it for a sequence of real numbers, since we will apply it for a sequence.

\begin{lemma} \label{l_velocities_d2}
Let $\{ \alpha_n \}_{n \in \N} \subset \R^+ $ such that $\alpha_n  \leq \lam $, then there exists $\{ \tilde v_n \}_{n \in \N} \subset \Q$ such that the following holds:
\begin{itemize}
\item $\tilde v_n = a_n + \frac{p_n}{q_n}$, with $p_n, q_n \in \N$,
\item $a_n = \floor*{{\alpha_n}} \leq \lambda_q^\alpha $,
\item $q_n, p_n \leq \lam $,
\item $ 0 \leq \alpha_n - \tilde v_n \leq \frac{2}{\lam }$,
\end{itemize} 
for any $n \in \N.$
\end{lemma}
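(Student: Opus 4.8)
The plan is an elementary Dirichlet-type approximation, carried out independently for each index $n$. First I would split $\alpha_n$ into its integer part $a_n:=\floor{\alpha_n}$ and its fractional part $\beta_n:=\alpha_n-a_n\in[0,1)$. Then I would approximate $\beta_n$ \emph{from below} by a rational number with a \emph{fixed} denominator $Q:=\floor{\lam}$, setting $q_n:=Q$, $p_n:=\floor{Q\beta_n}$, and $\tilde v_n:=a_n+p_n/q_n$. Since the parameters are chosen with $a_0$ large and $\lam=\lambda_q^{d(1+\alpha)+2}\ge\lambda_0=a\ge a_0$, we may assume $\lam\ge 2$, so that $Q=\floor{\lam}\ge\lam-1\ge\lam/2\ge 1$ and the quotient $p_n/q_n$ makes sense.

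Next I would check the four conclusions. The first is just the definition of $\tilde v_n$. For the second, $a_n=\floor{\alpha_n}\le\alpha_n$ and the asserted size bound on $a_n$ is then immediate from the bound assumed on $\alpha_n$. For the third, $q_n=Q=\floor{\lam}\le\lam$, while $p_n=\floor{Q\beta_n}\le Q\beta_n<Q\le\lam$ because $\beta_n<1$; in particular $p_n,q_n\in\N$. For the last point, from $0\le Q\beta_n-\floor{Q\beta_n}<1$ we get
\[
0\le \alpha_n-\tilde v_n=\beta_n-\frac{p_n}{Q}=\frac{Q\beta_n-p_n}{Q}<\frac1Q=\frac1{\floor{\lam}}\le\frac2{\lam},
\]
the last inequality using $\floor{\lam}\ge\lam/2$. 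This yields all four properties.

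I do not expect a genuine obstacle: the only mild points requiring care are that the approximation must be one-sided — which is automatic since we used the floor, so $p_n/q_n\le\beta_n$ always — and that taking the denominator $Q$ uniform in $n$ is harmless for the later use in Lemma~\ref{l_disjoint_d2}, where one first reduces the speed ratio $w=\tilde v_m/\tilde v_n$ to lowest terms, which only decreases the denominator and keeps it below $\lam$. If instead one insists on $p_n\ge 1$, it suffices to handle separately the trivial case $\beta_n<1/Q$ by taking $\tilde v_n=a_n$ directly; alternatively, one may replace the fixed-denominator choice by the largest continued-fraction convergent of $\beta_n$ with denominator $\le Q$ that does not exceed $\beta_n$, which gives the same bounds.
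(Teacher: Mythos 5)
Your proof is correct and essentially identical to the paper's: both split off the integer part and approximate the fractional part from below by a rational with a single denominator of size at most $\lam$, yielding the error bound $2/\lam$. The only cosmetic difference is your choice of denominator $\floor*{\lam}$ versus the paper's largest dyadic denominator $2^\ell\le\lam$, which changes nothing in the estimates.
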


\begin{proof}
Fix $\alpha_n$, we define $a_n := \floor*{{\alpha_n}} $ and $\overline{\alpha}_n := \alpha_n - a_n \in [0,1).$ We want to approximate $\overline{\alpha}_n$ with dyadic numbers. We define $\ell := \max \{ N \in \N : 2^N \leq \lam  \} $. Since the dyadic intervals are such that 
$$\bigcup_{i=0}^{2^\ell -1} \left [ \frac{i}{2^\ell} , \frac{i+1}{2^\ell} \right ) = [0,1)$$
there exists $i = 0,.., 2^\ell$ such that $\overline{\alpha}_n \in \left [ \frac{i}{2^\ell} , \frac{i+1}{2^\ell} \right )$, defining $p_n =i$ and $q_n = 2^\ell$, we get the thesis.
\end{proof}

\begin{prop} \label{p_disjointd2}
Consider a finite number of disjoint sets $\Lambda_i^j$ for $i=1,..,N$, $j=1,2$, as in Lemma \ref{l_geom} and their union $\Lambda:= \bigcup_{i=1}^N \bigcup_{j=1}^2 \Lambda_i^j \subset \R^2$. Let $\{w_n\}_{n=1,..,\lam }\subset \R$  satisfy
	\begin{equation*}
		w_n = a_n + \frac{p_n}{q_n}
	\end{equation*}
	where $a_n, q_n, p_n$ are positive integers and they are less or equal than $ \lam$.
		Then there exists a constant $c_0:= c_0(\bar C, \Lambda) >0$ with the following property: for every $\xi \in \Lambda$ and $n\in \N$ there exists 
	$a_{\xi,n}\in [0,1]$ such that the family of curves
	\begin{equation}
		x_{\xi,n}(t):= (w_n t+a_{\xi,n}) \xi \qquad \mbox{with $\xi\in \Lambda$,  $n=1,.., \lam $}
	\end{equation}
	satisfies
	\begin{equation}
		d_{\T^2}(x_{\xi,n}(t) , x_{\xi',m}(t)) \ge \frac{c_0}{(\lam)^4}
		\quad \text{for every $t\ge 0$, when  $\xi\neq \xi'$}.
	\end{equation}
\end{prop}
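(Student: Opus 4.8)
Write the curves' separation as a condition on the shifts and argue by a pigeonhole (probabilistic) selection. Concretely, the plan is to show that, for a suitable $\eps=c_0(\bar C,\Lambda)(\lam)^{-4}$, the set of tuples $(a_{\xi,n})_{\xi\in\Lambda,\,1\le n\le\lam}$ in the cube $[0,1]^{M}$, $M:=|\Lambda|\cdot\lam$ (the total number of curves), for which \emph{some} pair of curves with distinct directions comes within distance $\eps$ at some time has Lebesgue measure strictly less than $1$; any tuple in the complement of this bad set then gives the required shifts with $c_0/(\lam)^4$ in place of $\eps$.

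The first step is to reduce the collision condition for a single ordered pair $\big((\xi,n),(\xi',m)\big)$ with $\xi\ne\xi'$ to the setting of Lemma~\ref{l_disjoint_d2}. Rescaling time by $t\mapsto (\sigma-a_{\xi,n})/w_n$ (equivalently, setting $\sigma:=w_n t+a_{\xi,n}\ge 0$) one gets the identity
\[
d_{\T^2}\big(x_{\xi,n}(t),x_{\xi',m}(t)\big)=d_{\T^2}\big(\sigma\xi,\;(w\sigma+s)\xi'\big),\qquad w:=\frac{w_m}{w_n},\quad s:=a_{\xi',m}-w\,a_{\xi,n},
\]
where $s$ is understood modulo the period of $t\mapsto t\xi'$, and the range $\sigma\ge a_{\xi,n}$ is contained in $\{\sigma\ge 0\}$. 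Here one uses Lemma~\ref{l_velocities_d2}: all the $w_n$ may be taken with the \emph{same} denominator $q_n=2^\ell\le\lam$, so $w=w_m/w_n$ is rational with reduced form $A/N$ whose denominator $N$ and size $w$ are controlled by powers of $\lam$ and fall within the hypotheses of Lemma~\ref{l_disjoint_d2}. That lemma then yields, for each fixed $a_{\xi,n}$, that the set of $s$ — hence, since $s\mapsto a_{\xi',m}$ is a measure-preserving translation, the set of $a_{\xi',m}$ — for which $d_{\T^2}(\sigma\xi,(w\sigma+s)\xi')<\eps$ for some $\sigma\ge 0$ has measure $<\bar C\,N\eps\,\lam$, a fixed power of $\lam$ times $\eps$; by Fubini the corresponding bad set in the square of shifts $(a_{\xi,n},a_{\xi',m})\in[0,1]^2$ has the same measure bound.

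The second step is the union bound. There are at most $|\Lambda|^2(\lam)^2$ such pairs, each contributing a cylinder set in $[0,1]^M$ whose measure equals the $[0,1]^2$-measure above. The bookkeeping of the powers of $\lam$ (using the explicit bound on $N$ and on $w$ from Lemma~\ref{l_velocities_d2}) shows the total bad measure is at most $C_1(\Lambda)\,\bar C\,\eps\,(\lam)^4$; choosing $c_0=c_0(\bar C,\Lambda)$ small enough that $C_1(\Lambda)\bar C c_0<1$ and $\eps=c_0(\lam)^{-4}$ makes this $<1$, so a tuple $(a_{\xi,n})$ outside the bad set exists. For that choice, every pair with $\xi\ne\xi'$ satisfies $d_{\T^2}(x_{\xi,n}(t),x_{\xi',m}(t))\ge\eps=c_0(\lam)^{-4}$ for all $t\ge 0$ (curves sharing a direction are, and must be, excluded).

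The main obstacle is the second step's reduction to Lemma~\ref{l_disjoint_d2}: one must verify that after the time-rescaling the relative speeds $w=w_m/w_n$ genuinely satisfy that lemma's hypotheses — in particular that their reduced denominators stay within a fixed power of $\lam$ and that $w<\lam$ — which is precisely the reason the common-denominator normalization of Lemma~\ref{l_velocities_d2} is needed; without it the denominator of a ratio could be uncontrolled. Once this is in place, the measurability of the bad sets and the tracking of the exponents (so that the union bound still closes with the exponent $4$) are routine.
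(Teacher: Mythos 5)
Your overall strategy is the paper's: fix $\eps=c_0(\lam)^{-4}$, bound the measure of the ``bad'' shifts pair by pair via Lemma~\ref{l_disjoint_d2}, and close with a union bound. The one structural difference is that you randomize all $M=|\Lambda|\cdot\lam$ shifts independently in $[0,1]^M$, whereas the paper uses a single shift $s$ common to every curve (its sets $A_{\xi,\xi',n,m}$ impose the same $s$ on both curves and it then intersects them all). Your variant is in fact the more robust one: for a pair with $\xi\neq\xi'$ but $w_n=w_m$ (e.g.\ $n=m$), a common shift cannot separate the two curves, since $\tau\mapsto\tau\xi$ and $\tau\mapsto\tau\xi'$ both return to the origin at common multiples of their rational periods; only the relative shift $s=a_{\xi',m}-w\,a_{\xi,n}$, which your product-measure/Fubini argument genuinely varies, can do the job. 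Your reduction identity $d_{\T^2}(x_{\xi,n}(t),x_{\xi',m}(t))=d_{\T^2}(\sigma\xi,(w\sigma+s)\xi')$ with $w=w_m/w_n$ is correct, as is the remark that the bad set of $s$ must be measured on a translated unit interval (harmless, by the $\Z T'$-periodicity of that set in the proof of Lemma~\ref{l_disjoint_d2}).

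The gap is exactly at the point you flag as ``the main obstacle'' and then dismiss: the hypotheses of Lemma~\ref{l_disjoint_d2} are \emph{not} satisfied by $w=w_m/w_n$. With the common denominator $q_n=2^\ell$ one has $w_m/w_n=(a_m2^\ell+p_m)/(a_n2^\ell+p_n)$, so the reduced denominator $N$ divides $a_n2^\ell+p_n$ and is only bounded by $\sim(\lam)^2$, not by $\lam$ (also $w\le \lam+1$ rather than $w<\lam$, a trivial point). Re-running the proof of Lemma~\ref{l_disjoint_d2} for general $N$ gives the per-pair bound $CN\eps\lam\le C\eps(\lam)^3$, so the union bound over $\le|\Lambda|^2(\lam)^2$ pairs yields a total bad measure of order $\eps(\lam)^5$; with $\eps=c_0(\lam)^{-4}$ this is of order $c_0\lam$, which is not $<1$, so your exponent bookkeeping does not close as claimed. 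The argument works once the exponent $4$ is replaced by $5$ in the statement, which is harmless downstream since \eqref{zz3} only needs $\mu_{q+1}^{-1}\le c_0(\lam)^{-k}$ for some fixed $k$, and $\mu_{q+1}=\lambda_q^{b\gamma}$ with $b\gamma$ much larger. (The paper's own proof is no more careful on this denominator issue, but as written your quantitative claim is not justified.)
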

\begin{proof}
We fix $c_0$ such that 
$$ C c_0 |\Lambda| <1.$$
We define the following sets
$$ A_{\xi, \xi', n,m} := \left \{ s\in [0,1] : d_{\T^2} \left ((w_n t + s) \xi, (w_m t + s) \xi' \right ) \geq  \frac{c_0}{(\lam)^4 } \right \}$$
for $\xi, \xi' \in \Lambda$ and $n,m=1,.., \lam.$

We define
$$ A: = \bigcap_{n,m=1}^{\lam } \bigcap_{\xi \neq \xi' \in \Lambda}  A_{\xi, \xi', n,m}$$
and the thesis will follow by proving that $A$ is not empty. We claim that $\mathscr{L}^1 (A) >0.$ Using Lemma  \ref{l_disjoint_d2} we notice that the measure of the complement of the set $A_{\xi, \xi',n,m} $ satisfies $\mathscr{L}^1( A^c_{\xi, \xi',n,m}) \leq  \frac{C c_0 (\lam)^2}{(\lam)^4}= \frac{C c_0 }{(\lam)^2}$.
Then
\begin{align*}
\mathscr{L}^1(A^c) & = \mathscr{L}^1 \left(  \bigcup_{n,m=1}^{\lam } \bigcup_{\xi \neq \xi' \in \Lambda}  A^c_{\xi, \xi', n,m}  \right) 
\\
& \leq \sum_{n,m=1}^{\lam} \sum_{\xi \neq \xi' \in \Lambda} \mathscr{L}^1(A^c_{\xi, \xi', n,m} ) \leq (\lam)^2 |\Lambda|  \frac{C c_0 }{(\lam)^2} \leq C c_0 |\Lambda| <1,
\end{align*}
and so $A$ is not empty. 
\end{proof}

\subsection{Disjointness of the supports}
Set $w_n:= \mu_{q+1}^{d/p'}  v_n^{1/p'}$, where the sequence  $\{\tilde v_n\}_{n= 1,..,\lam }$ is given by Lemma \ref{l_velocities_d2} applied to the sequence $\alpha_n = \left( \frac{n}{k} \right)^{1/p'}$ and $ v_n = \tilde v_n^{p'}$ (notice that the assumption $\alpha_n \leq \lam$ is satisfied thanks to the bound \eqref{remark:sum is finite}). We apply Proposition \ref{p_disjointd2} to $\{ w_n \}_{n= 1,..,\lam }$ (notice that the assumptions are satisfied in view of Lemma \ref{l_velocities_d2}) obtaining the family $\{ a_{\xi, n}:\, \xi\in \Lambda, n=1,..,\lam  \}$. 
Finally, starting from the building blocks introduced in Section~\ref{sec:building}, we define
\begin{equation}
	W_{\xi, \mu_{q+1},  v_n}( t , x ):=\tilde W_{\xi, \mu_{q+1},  v_n}( t, x-a_{\xi,n}\xi),
	\quad
	\Theta_{\xi, \mu_{q+1},  v_n}(t, x):=\tilde \Theta_{\xi, \mu_{q+1},  v_n}(t, x-a_{\xi,n} \xi),
\end{equation}
for any $n= 1,..., \lam $ and $\xi\in \Lambda$. 

We now show that
\begin{equation}\label{eq:disjointsupportsdim2-bis}
	W_{\xi, \mu_{q+1}, v_n} \cdot \Theta_{\xi', \mu_{q+1}, v_m}( t , x )=0
	\quad \text{for any $(x,t)\in \T^2\times \R^+$,}
\end{equation}
for any $\xi\neq \xi' \in \Lambda$, $n,m=1,.., \lam $.

Indeed for any fixed $t\ge 0$ one has the inclusions 
\begin{equation}
     \supp W_{\xi, \mu_{q+1}, v_n}( t , \cdot) \subset B_{2\rho\mu^{-1}_{q+1}}( t w_n \xi + a_{\xi, n}\xi ),
     \quad
     \supp \Theta_{\xi', \mu_{q+1}, v_m}( t , \cdot) \subset B_{\rho\mu^{-1}_{q+1}}( t w_m \xi' + a_{\xi', m} \xi'),
\end{equation}
hence we just need to check that
$B_{2\rho\mu^{-1}_{q+1}}( t w_n \xi + a_{\xi, n}\xi )\cap B_{\rho\mu^{-1}_{q+1}}( t w_m \xi' + a_{\xi', m} \xi')= \emptyset$.
Proposition \ref{p_disjointd2} guarantees 
\[
d_{\T^2}(t w_n \xi + a_{\xi, n}\xi,  t w_m \xi' + a_{\xi', m} \xi')\ge \frac{c_0}{(\lam)^4},
\]
hence the claim is proved provided 
\begin{equation}\label{zz3}
	\frac 34 \mu_{q+1}^{-1} \le \frac{c_0}{ (\lam)^4 }
	.
\end{equation}
 \eqref{zz3} follows from our choice of $\mu_{q+1} = \lambda_q^{b\gamma}$, because $\gamma>1$ and $b>4 (d(1+\alpha)+2)$.

\subsection{{Proof of the Proposition~\ref{p_inductive} in the case d=2}}
The estimates up to Section~\ref{sec:end} are done in the same way by observing that $\tilde v_n=v_n^{1/p'}$ and $(n/\kappa)^{1/p'} $ are comparable up to a factor $2$.
In Section~\ref{sec:new}, we computed the product  $\theta_{q+1,i}^{(p)} w_{q+1}^{(p)} $ in \eqref{eqn:formulone} with which we were able to compensate the old error $R_{\ell,i}$ (for $i=1,\dots, N$).
Now this product has the form
\begin{equation}
	\theta_{q+1,i}^{(p)}\sum_{k=1}^N w^{(p)}_{q+1,k}= \theta_{q+1,i}^{(p)}w^{(p)}_{q+1,i}=\sum_{n\ge \nop} \sum_{\xi\in \Lambda_i^{[n]}}    \chi(\kappa |R_{\ell,i}|- n)
\textstyle{a_{\xi}\left(\frac{R_{\ell,i}}{|R_{\ell,i}|}\right)} (\Theta_{\xi, \mu_{q+1}, v_n} W_{\xi, \mu_{q+1}, v_n})(\lambda_{q+1} t, \lambda_{q+1}x),
\end{equation}
as a consequence of \eqref{eq:disjointsupportsdim2}, \eqref{remark:sum is finite}, the fact that $\chi\cdot \bar \chi= \chi$ and $\chi ( \kappa | R_{\ell,i} | - n ) \cdot \chi( \kappa| R_{\ell,i}| - m )=0$ when $|n-m|>1$.

Since the average of $\Theta_{\xi, \mu_{q+1}, v_n}W_{\xi, \mu_{q+1}, v_n}$ which appears from the forth line of formula \eqref{eqn:formulone}, in the definition of $R^{quadr}$ and in $m$ is now ${v_n} \xi$ rather than $n/\kappa\xi$, the definition of $\tilde R_{\ell,i}$ should now be replaced by
$$
\tilde R_{\ell,i}:= 
\sum_{n\ge \nop} \chi(\kappa |R_{\ell,i}|- n)	\frac{R_{\ell,i}}{|R_{\ell,i}|} {v_n},
$$
and the obvious modification takes place for the definition of $R^{quadr}$ and $m$.
Observing that $|v_n - \frac{n}{k}| \leq p' \frac{n}{k} |v_n^{1/p'} - (\frac{n}{k})^{1/p'}| \leq p' \lam \frac{\delta_{q+2}}{40 p'} \frac{2}{\lam } = \frac{\delta_{q+2}}{20}$ the estimate \eqref{eqn:tildeRestimate} now works analogously to give $|R_{\ell,i}- \tilde R_{\ell,i}| \leq \frac{16
}{20
} \delta_{q+2}$.
The rest of the estimates work as in Sections~\ref{sec:new}, \ref{sec:estimate_R_q} and~\ref{sec:higherderivative}.

\vspace{0.5cm}

\textbf{ Acknowledgements}. 
MS has been supported by the SNSF Grant 182565.
The authors wish to thank Maria Colombo for bringing the problem of non-uniqueness of integral curves to their attention and for useful suggestions.

\bibliographystyle{alpha}
\bibliography{bibliografia}

\end{document}